\newtheorem{thm}{Theorem}[section]
\newtheorem{lem}[thm]{Lemma}
\newtheorem{prop}[thm]{Proposition}
\theoremstyle{definition}
\theoremstyle{remark}
\numberwithin{equation}{section}
\newcommand{\supp}{\text{supp }}
\newcommand{\schr}{e^{it\Delta}}
\newcommand {\bbr}{\mathbb{R}}
\newcommand {\la}{\lambda}
\newcommand {\al}{\alpha}
\newcommand {\ep}{\epsilon}
\newcommand {\del}{\delta}
\newcommand {\alphla}{(-\Delta){}^{\frac\alpha2}}
\newcommand {\alphlah}{(-\Delta){}^{\frac\alpha2}}
\newcommand {\propa}{e^{it\alphla}}
\newcommand {\propah}{e^{it'\alphlah}}
\newcommand {\duhamel}{e^{i(t-s)\alphla}}
\newcommand {\mix}[2]{L^{#1}_tL^{#2}_x}
\newcommand {\mi}[1]{L^{#1}_{\xi,\eta_1}}
\newcommand {\betaal}{\beta(\alpha,q,r)}
\newcommand \tx[2] {L^{#1}_tL^{#2}_x}
\newcommand \txp[2] {L^{#1}_{t'}L^{#2}_{x'}}
\newcommand {\intn} {\int_{t_n}^{t_{n+1}}}
\newcommand {\intd} {\int_{\bbr^d}}
\newcommand {\haf} {\widehat{f_k}}
\newcommand {\sn} {\sum_{k=1}^N}
\newcommand {\fl} {\|f\|_{L^2}}
\begin{document}
\title[mass concentration for Nonlinear Schr\"odinger equations]
{Mass concentration for the $L^2$-critical Nonlinear
Schr\"odinger equations of higher orders}
\author{Myeongju Chae}
\author{Sunggeum Hong}
\author{Sanghyuk Lee}

%Chae
\address{Department of Applied Mathematics\\
        Hankyong National University\\
        Ansong 456-749, Korea}
\email{mchae@hknu.ac.kr}

\

%Hong
\address{Department of Mathematics\\
        Chosun University\\
        Gwangju 501-759, Korea}
\email{skhong@chosun.ac.kr }

%Lee
\address{School of Mathematical Sciences, Seoul National University, Seoul, Korea}
\email{shklee@snu.ac.kr}

\thanks{2000\textit{Mathematics Subject Classification. 35B05, 35B30, 35B33, 35Q55, 42B10} }
\thanks{\textit{Key words and phrases. Schr\"odinger equations, high orders, mixed norm blow-up,
mass concentration.}  }
\thanks{}
\begin{abstract}
 We consider the mass concentration phenomenon for the $L^2$-critical
 nonlinear Schr\"odinger equations of higher orders. We show that any solution
 $u$ to $iu_{t} + (-\Delta){}^{\frac\alpha2} u =\pm |u|^\frac{2\alpha}{d}u$, $u(0,\cdot)\in L^2$  for $\al >2$,
 which blows up in a finite time, satisfies a mass concentration phenomenon near the blow-up time.
 We verify that  as $\al$ increases, the size of region capturing a mass concentration
 gets wider due to the stronger dispersive effect.
 \end{abstract}
\maketitle
\section{introduction}\label{intro}
We consider the  $L^2$-critical Cauchy problem in $\bbr^d$, $d\ge 2$,
\begin{eqnarray}\label{alphsch}
\begin{cases}
iu_{t} + (-\Delta){}^{\frac\alpha2} u =\pm |u|^\frac{2\alpha}d u, ~~(t,x)\in \mathbb R_+\times \mathbb R^d  \\
u(0,x) = u_0(x) .
\end{cases}
\end{eqnarray}
Here $(-\Delta){}^{\frac\alpha2}$ is the pseudo-differential operator
defined by
\[(-\Delta){}^{\frac\alpha2} f(x)=(2\pi)^{-d}
\int_{\mathbb R^d} e^{ix\xi} |\xi|^\alpha\widehat f(\xi) d\xi\]
and $\widehat f(\xi)=\int_{\mathbb R^d} e^{-ix\xi}\widehat f(\xi) d\xi$.
The equation \eqref{alphsch} is $L^2$-critical in the sense the that the equation is invariant under the rescaling transformation
$u\to u_\lambda$, $u_\lambda(t,x)=\lambda^\frac d2 u(\lambda^\alpha t,\lambda x)$, which preserves $L^2$ norm.
The system  conserves the mass $M(t)$ and the energy $E(t)$ \textit{a priori};
\begin{align*}
 &M(t) \ = \  \int_{\bbr^d} |u(t,x)|^2 \, dx   ,\\
&E(t) \ = \ \int_{\bbr^d} |(-\Delta)^{\frac {\al}{4}}u(t,x)|^2 \pm (\frac {\al}{d} +1 )^{-1} |u(t,x)|^{\frac {2\al}{d}+2} \, dx.
\end{align*}
The Fourth  order Schr\"odinger equations were initially studied by Karpman \cite{K} and  Karpman  and Shagalov \cite{Ks}.
They considered the fourth order Schr\"odinger equation to take into account the role of small fourth order
dispersion terms in the propagation of intense laser beams in a bulk medium with a cubic nonlinearity (Kerr nonlinearity).
The fourth order $L^2$- critical case with nonlinearity $|u|^{\frac {8}{d}}u$  in  \eqref{alphsch}  was studied in \cite{FIP, p1}.
When $\al\neq 2$, it is unknown whether there exists a blow up solution of \eqref{alphsch} except the numerical evidence
of \cite{FIP}; unlike $\al=2$ case
a virial  type inequality or  a pseudo conformal type  symmetry are not yet known to hold.

%As we will see shortly,
%the local well-posedness is obtained in a standard manner, however the global well posedness is hardly settled except
%the focusing/defocusing conjecture.

In this paper we are concerned with the mass concentration phenomena of  blowup solutions to \eqref{alphsch},
especially when the initial datum $u_0\in L^2$ and its mixed $\tx qr$-norm blows up in a finite time.
%When the initial data is sufficiently smooth, namely $u_0\in H^{\frac\alpha 2}$, many important properties of \eqref{alphsch} can be obtained %\marginpar{to be concrete: what features?}
%relying on the presence of energy conservation \cite{ca, hk, mt}. However when we deal with rough initial data,
%the usual argument does not work any longer.
When $\alpha=2$ and $d=2$, Bourgain in his seminal paper \cite{bo2} showed that if the
$L^2$-wellposed solution in $\bbr^2$ breaks down at a maximal
time $0 < T^* < \infty$ with
$
\|u\|_{L_{t,x}^{\frac{2(d+2)}d}([0,T^{*}) \times \mathbb{R}^d)}=\infty,
$
then the blow-up solution has a mass concentration phenomenon:
\begin{eqnarray*}
%\label{concent}
\limsup_{t\nearrow \, T^*} \sup_{
x\in \mathbb{R}^d} \int_{
B(x,(T^*-t)^{\frac12})} |u(t,x)|^2 dx \ge
\epsilon
\end{eqnarray*}
where $\epsilon=C\|u_0\|_2^{-M}$ for some $M>0$. Later, this was
extended to higher dimensions by B\'egout and Vargas \cite{bv}. A generalization in mixed norm spaces $\mix q r$  was obtained
in \cite{ckh}.

\

We consider the case $\alpha > 2$.
The linear part of the equation \eqref{alphsch} has stronger dispersion, compared to the case $\alpha=2$, which may be explained using a following heuristics (see p. $59$ in \cite{tao}).
The plane wave $u(t,x) = e^{ix\cdot\xi_0 + it|\xi_0|^{\al}}$ solves
\begin{align*}
\begin{cases}
  i\partial_t u + (-\Delta)^{\frac {\al}{2}} u=0,\\
   \widehat u_0(\xi)=\del_{\xi_0}.
   \end{cases}
\end{align*}
In order to get a sufficiently broad band solution, still around $\xi_0$, we define
\[u(t,x) = e^{ix\cdot\xi_0 + it|\xi_0|^{\al}}\phi(\ep(x+\al|\xi_0|^{\al-1}\xi_0 t))\]
for a smooth bounded function $\phi$, then $u$ can be shown to satisfy
$ i\partial_t u + (-\Delta)^{\frac {\al}{2}} u = O_{\phi}(\ep^2).$
This means that the profile of $|u|$ moves roughly at velocity $-\al |\xi_0|^{\al-1}\xi_0$. Assuming high frequency initial data
($|\xi_0|\gg1 $), the propagation speed increases as $\alpha$ increases. In other words, the wave tends to spend shorter time in a fixed region.
So, when $\alpha>2$ it is reasonable to expect that we need a larger set for concentration region than $B(x,(T^*-t)^{\frac12})$
to capture nonzero mass in the set. Now, considering the scaling invariance $u\to u_\lambda$  of the equation
\eqref{alphsch}, it is natural to expect that
\begin{eqnarray}
\label{concent}
\limsup_{t\nearrow \, T^*} \sup_{
x\in \mathbb{R}^d} \int_{
B(x,(T^*-t)^{\frac1\alpha})} |u(t,x)|^2 dx \ge
\epsilon>0
\end{eqnarray}
for a  solution $u$ which blows up at $T^*$.
If we impose further condition that  $\epsilon$ depends only  on $\|u\|_0$, then
among the power type sizes, $(T^*-t)^\beta$,  we can see that the size $(T^*-t)^{\frac1\alpha}$ is optimal by a simple
scaling argument.

\

Before giving precise statement of our results, we briefly clarify the issue of  wellposedness of \eqref{alphsch}.
 The local well-posedness in $H^s (\bbr^d), s\ge 0$ relies on the space time estimate for the free propagator
\[ e^{it\alphla}f(x)=\int_{\mathbb R^d} e^{ix\xi+it|\xi|^\alpha} \widehat f(\xi) d\xi,\]
which is called \emph{Strichartz's estimate}(see \eqref{strihomo} in Lemma \ref{stri}).
We call that a pair $(q,r)$ is $\alpha$-admissible if
\[\frac \alpha q+\frac dr =\frac d2, ~~q,\, r \ge 2 \ \text{ and } \ r\neq \infty.\]
Then by the usual argument it is possible to show
the inhomogeneous Strichartz's estimates \eqref{striinho} (Lemma \ref{stri}) for $\alpha$-admissible $(q,r)$ and $(\tilde q,\tilde r)$.
%In particular
%when the initial datum $u_0\in L_x^2(\bbr^d)$,
%the local wellposedness relies on the space time estimate for the free propagator
%\[ e^{it\alphla}f(x)=\int_{\mathbb R^d} e^{ix\xi+it|\xi|^\alpha} \widehat f(\xi) d\xi,\]
%which is called \emph{Strichartz's estimate}.
By Duhamel principle the solution
can be written  as
\begin{equation}
\label{duha} u(t,x)=\propa u_0\mp i\int_0^t \duhamel (|u|^\frac{2\alpha}d(s)u(s)) ds. \end{equation}

When  the initial datum $u_0\in L_x^2(\bbr^d)$, following the  standard argument for local wellposedness,  we see that
 there exists the unique solution $u(t,x)$ on a small time interval $[0,T]$ such that
\[u\in C([0,T];L^2(\bbr^d))\cap L^q([0,T];L^r(\bbr^d))\]
whenever $(q,r)$ is $\alpha$-admissible and
\begin{equation}\label{r-range}
 \max\, (\frac d{2(d+2\alpha)}, \frac{d-\alpha}{2d})\le \frac 1r\le \frac{d+\alpha}{2(d+2\alpha)}.
\end{equation}
The existence time interval $[0,T]$ is extended as long as
$
\|u\|_{\mix q r([0,T] \times \mathbb{R}^d)}<\infty.
$
If the solution blows up at $T^*$, then
 \begin{equation}\label{blowup-qr-1}
\|u\|_{L_{t}^{q}L_{x}^{r}([0,T^{*}) \times \mathbb{R}^d)}=\infty.
\end{equation}

Indeed, using \eqref{duha}, the inhomogeneous Strichartz's estimate \eqref{striinho} and H\"older's inequality,  one get for any $\alpha$-admissible $(q,r)$ and
$(\widetilde q, \widetilde r)$
\begin{eqnarray*}
& &\big\|\int_0^T e^{it(-\Delta)^{\frac{\alpha}{2}}(t-s)}
[|u(s)|^{\frac {2\alpha}d}u (s)-|v(s)|^{\frac {2\alpha}d}v (s)] ds\big\|_{\tx qr} \\
&\le& \||u(s)|^{\frac {2\alpha}d}u(s) -|v(s)|^{\frac {2\alpha}d}v(s) \|_{\tx
{\tilde q'}{\tilde r'}}\\
 &\le& C \, (\|u\|_{\tx {q_0}{r_0}}^{\frac {2\alpha}d}+\|u\|_{\tx
{q_0}{r_0}}^{\frac{2\alpha}d})\|u-v \|_{\tx {q_0}{r_0}}
 \end{eqnarray*}
where
$(\frac1{q_0},\frac1{r_0})=\frac{2\alpha+d}d(\frac1{\tilde{q}'},\frac1{\tilde{r}'}).$
  Hence the nonlinear map becomes a contraction map if there are $\alpha$-admissible pairs $(q,r)$ and $(\tilde q, \tilde r)$ satisfying \begin{equation}\label{qr}
(\frac {2\alpha}d +1)\frac1q= \frac1{\tilde{q}'}, \quad(\frac {2\alpha}d +1)\frac1r=\frac1{\tilde{r}'}.
\end{equation}
It is possible as long as the condition \eqref{r-range} is satisfied (see Figure \ref{fig:pre-result}).

%figure%figure%figure%figure%figure%figure%figure%figure%figure%figure%figure%figure
%figure%figure%figure%figure%figure%figure%figure%figure%figure%figure%figure%figure

\begin{figure}[t]
\label{fig:pre-result}
\centering \setlength{\unitlength}{0.032in}
\begingroup\makeatletter\ifx\SetFigFont\undefined%
\gdef\SetFigFont#1#2#3#4#5{%
  \reset@font\fontsize{#1}{#2pt}%
  \fontfamily{#3}\fontseries{#4}\fontshape{#5}%
  \selectfont}%
\fi\endgroup%
{\renewcommand{\dashlinestretch}{30}
\begin{picture}(104,94)
\thinlines
\drawframebox{60.0}{40.0}{80.0}{80.0}{}
\drawdashline{60.0}{80.0}{60.0}{0.0}
\drawdashline{20.0}{40.0}{100.0}{40.0}
\drawpath{60.0}{0.0}{28.0}{40.0}
\drawpath{60.0}{80.0}{92.0}{40.0}
\drawhollowdot{60.0}{80.0}
\drawhollowdot{92.0}{40.0}
\drawhollowdot{28.0}{40.0}
\drawhollowdot{60.0}{0.0}
\drawcenteredtext{62.0}{83.0}{$c$}
\drawcenteredtext{94.0}{45.0}{$d$}
\drawcenteredtext{32.0}{43.0}{$a$}
\drawcenteredtext{62.0}{3.0}{$b$}
\drawcenteredtext{35.5}{30.6}{$\bullet$}
\drawcenteredtext{47.8}{15.4}{$\bullet$}
\drawpath{28.0}{1.0}{28.0}{-1.0}
\drawpath{92.0}{1.0}{92.0}{-1.0}
\drawcenteredtext{15.0}{40.0}{$\frac12$}
\drawcenteredtext{60.0}{-5.0}{$\frac12$}
\drawcenteredtext{28.0}{-5.0}{$\frac{d-\alpha} {2d}$}
\drawcenteredtext{18.0}{-5.0}{$O$}
\drawcenteredtext{92.0}{-5.0}{$\frac{d+\alpha}{2d}$}
\drawcenteredtext{16.0}{80.0}{$1$}
\drawcenteredtext{100.0}{-5.0}{$1$}
\drawcenteredtext{15.0}{88.0}{$\frac1q$}
\drawcenteredtext{108.0}{-5.0}{$\frac1r$}
\drawcenteredtext{40.0}{32.0}{$A$}
\drawcenteredtext{48.0}{20.0}{$B$}
\drawdotline{20.0}{0.0}{92.0}{40.0}
\drawdotline{60.0}{80.0}{20.0}{0.0}
\end{picture}

}
\vspace{0.2in}

\caption{\small The line segment $[a,b]=[ (\frac{d-\alpha}{2d},\frac12), (\frac12,0)]$ stands
for  $(\frac1r,\frac1q)$  of admissible pair $(q,r)$, and
$[c,d]=[(\frac12,1), (\frac{d+\alpha}{2d},\frac12)]$ stands for $(\frac1{\widetilde r'}, \frac1{\widetilde q'})$
of the dual exponents of admissible pairs $(\tilde q,\tilde r)$.
For $(\frac1r,\frac1q)$ in the segment $[A,B]$ we can find admissible pairs $(\tilde q, \tilde r)$ satisfying the relation \eqref{qr}. }
\end{figure}

 %figure%figure%figure%figure%figure%figure%figure%figure%figure%figure%figure%figure
 %figure%figure%figure%figure%figure%figure%figure%figure%figure%figure%figure%figure

The following is our first result.
\begin{thm}\label{elliptic}
 Let $(q,r)$ be an $\alpha$-admissible pair satisfying  $q > 2$ and \eqref{r-range}.
  Suppose that the solution of \eqref{alphsch} satisfies $\|u\|_{L_{t}^{q}L_{x}^{r}([0,t) \times \mathbb{R}^d)}
<\infty$ for $0 < t < T^{*}<\infty$ and
\eqref{blowup-qr-1}. Then \eqref{concent} holds
\end{thm}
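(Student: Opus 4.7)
The plan is to follow the Bourgain--B\'egout--Vargas scheme, adapted from \cite{bo2,bv,ckh} to the higher order propagator $e^{it\alphla}$ and the mixed norm $\tx qr$. Fix a blow-up sequence $t_n \nearrow T^*$ chosen so that $\|u\|_{\tx qr([t_n,T^*)\times\bbr^d)} = \eta$ for a small fixed $\eta$; this is possible by \eqref{blowup-qr-1}. Using the $L^2$-critical scaling $u \mapsto u_\lambda$, set $\lambda_n = (T^*-t_n)^{-1/\al}$ and let $v_n(t,x)=\lambda_n^{-d/2}u(t_n+\lambda_n^{-\al}t,\lambda_n^{-1}x)$. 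Then $v_n$ is an $L^2$-solution of \eqref{alphsch} on a unit time window $[0,1]$ with $\|v_n(0)\|_2=\|u_0\|_2$, and the blow-up hypothesis translates into a uniform lower bound on $\|v_n\|_{\tx qr([0,1]\times\bbr^d)}$. The goal becomes to show that $v_n(0,\cdot)$ admits an $L^2$ mass concentration on a ball of unit radius, which then unscales to \eqref{concent} at radius $(T^*-t_n)^{1/\al}$.

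The central tool is a refined Strichartz inequality for the propagator $\propa$ in the mixed norm, of the shape
\begin{equation*}
\|\propa f\|_{\tx qr(\bbr\times\bbr^d)} \le C\,\|f\|_{L^2}^{1-\theta}\,\Bigl(\sup_Q |Q|^{-1/2}\|\widehat f\,\chi_Q\|_{L^2}\Bigr)^{\theta}
\end{equation*}
with $\theta>0$, the supremum running over dyadic cubes $Q\subset \bbr^d$. For admissible $(q,r)$ with $q>2$ in the range \eqref{r-range}, this follows by combining Lemma \ref{stri} with a bilinear restriction estimate adapted to the surface $\tau=|\xi|^\al$ together with a Whitney-type decomposition; the Tao--Vargas--Vega bilinear-to-linear reduction carries over because the symbol $|\xi|^\al$ still has the curvature (nonvanishing Gaussian curvature on $|\xi|\sim 1$) needed for the relevant bilinear estimates. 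Applying this refined inequality to $v_n(0)$ (after extracting the linear part from the Duhamel representation \eqref{duha}, using Strichartz to absorb the nonlinear contribution, which is quintically small in $\eta$) yields a dyadic cube $Q_n\subset\bbr^d$ and a constant $c=c(\|u_0\|_2)>0$ such that $|Q_n|^{-1/2}\|\widehat{v_n(0)}\,\chi_{Q_n}\|_{L^2}\ge c$.

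Next, one extracts a profile decomposition $v_n(0)=\sum_{k=1}^{N}\phi_n^k+w_n^N$ by iterating the refined inequality; each $\phi_n^k$ has Fourier support in a cube $Q_n^k$. The inverse estimate then gives spatial concentration: by a local Bernstein/Sobolev embedding at the scale $|Q_n^k|^{1/d}$, the first profile concentrates mass on some ball of radius $|Q_n^1|^{-1/d}$ centered at a point $x_n^1$, and by choice of the rescaling this radius is $\le 1$. Passing back through the rescaling gives \eqref{concent}. The main obstacle is the absence of Galilean invariance for $\al>2$: for $\al=2$ one can boost an arbitrary frequency cube to the unit cube and then use $L^p$ Sobolev to obtain spatial concentration on its dual ball, but for $\al>2$ this symmetry is broken. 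The remedy is to work with the unboosted profile and extract concentration directly from the frequency cube $Q_n$ via Plancherel and a dual ball argument; this forces the refined inequality to be genuinely scale-invariant in $|Q|$ and places the curvature assumption at the center of the proof. Once the concentration is obtained for $v_n(0)$, a standard perturbation/continuity argument (using the inhomogeneous Strichartz estimate \eqref{striinho} to propagate concentration from time $0$ to a nearby time $t\in[0,1]$) yields \eqref{concent} for $u(t_n,\cdot)$, completing the proof.
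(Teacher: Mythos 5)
Your overall scheme (refined Strichartz inequality, extraction of a non-concentrated frequency piece, then spatial localization) is the right one, but the central tool you invoke is precisely the one that is unavailable for $\alpha>2$. You propose a refinement of the form $\|\propa f\|_{\tx qr}\le C\|f\|_2^{1-\theta}\bigl(\sup_Q|Q|^{-1/2}\|\widehat f\chi_Q\|_{L^2}\bigr)^{\theta}$ with the supremum over dyadic \emph{cubes}, to be proved via the Tao--Vargas--Vega bilinear-to-linear reduction. That machinery for the paraboloid rests not only on curvature but on the affine symmetries of $\tau=|\xi|^2$: Galilean boosts together with parabolic rescaling map an arbitrary cube (and an arbitrary Whitney pair of nearby cubes) to a canonical unit-scale configuration, and this normalization is what makes the cube-based refinement scale-invariant and summable over the Whitney decomposition. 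For $\tau=|\xi|^\alpha$ with $\alpha\neq2$ the only surviving symmetry is the global dilation $\xi\to\lambda\xi$, which maps annuli to annuli but cannot recenter a small cube away from the origin. This is why the paper's Proposition \ref{lem-con} is stated for dyadic \emph{shells} $B_k$ and is proved by an elementary bilinear interaction estimate between separated dyadic frequencies (Lemma \ref{inter}), with no bilinear restriction theorem at all. As written, your refined inequality is also not usable even formally: with the $L^2$ norm on the cube and the weight $|Q|^{-1/2}$ the right-hand side can be infinite for general $f\in L^2$, while with the weight removed the estimate is trivial and carries no non-concentration information; the refinement must involve $\|\widehat f\chi_Q\|_{L^p}$ with $p<2$ for the uncertainty-principle step (cf. Lemma \ref{nonconcent}) to function.

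The second gap is at the end. After rescaling to a unit time window you assert that the concentration radius $|Q_n^1|^{-1/d}$ is at most $1$ ``by choice of the rescaling,'' but nothing in your argument rules out the selected frequency block sitting at very low frequency, which would produce a spatial ball much larger than $(T^*-t)^{1/\alpha}$ and hence no concentration statement at the claimed scale. In the paper this is exactly where Lemma \ref{tube} and the pigeonholing around \eqref{p-60}--\eqref{p-6} enter: the linear wave $e^{it\alphla}f_0$ is localized to finitely many space-time tubes $I_n\times C_n$ with $|I_n|=2^{-k\alpha}$ and $l(C_n)=2^{-k}$, and the pointwise bound on $e^{it\alphla}f_0$ combined with the lower bound on the interaction integral forces $t_{n+1}-t_n\gtrsim 2^{-k\alpha}$, i.e. $2^{-k}\lesssim (T^*-t_0)^{1/\alpha}$. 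Some quantitative version of this time-localization step is indispensable and is missing from your proposal; the ``standard perturbation/continuity argument'' you mention does not supply it.
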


The results in \cite{bv, bo2, ckh}  were obtained by the use of  refinement of Strichartz's estimates for $\schr f$
which come from bilinear restriction estimate for the paraboloid \cite{mvv,lv,ta,tvv}.
To deal with the case $\alpha>2$ we need similar estimates for $\propa$.
It turns out that the related analysis is simpler than \cite{bv, bo2, ckh}
due to a stronger dispersion effect so that we  give a direct proof of  refinement of Strichartz's estimates
for $\propa$ exploiting bilinear interaction of Schr\"odinger waves. In particular we have the refinement
 (Proposition \ref{lem-con}) in terms of dyadic shells, instead of cubes as in the previous work \cite{bv, bo2, ckh}
 for which  the Galilean invariance of the operator $\schr f$ played a role, which is no longer available
 when $\alpha\neq 2$.

\

%figure%figure%figure%figure%figure%figure%figure%figure%figure%figure%figure%figure
%figure%figure%figure%figure%figure%figure%figure%figure%figure%figure%figure%figure
\begin{figure}[t]
\label{fig:pre-result2}
\centering \setlength{\unitlength}{0.032in}
\begingroup\makeatletter\ifx\SetFigFont\undefined%
\gdef\SetFigFont#1#2#3#4#5{%
  \reset@font\fontsize{#1}{#2pt}%
  \fontfamily{#3}\fontseries{#4}\fontshape{#5}%
  \selectfont}%
\fi\endgroup%
{\renewcommand{\dashlinestretch}{30}

\begin{picture}(120,94)
\thinlines
\drawframebox{60.0}{40.0}{80.0}{80.0}{}
\drawdashline{60.0}{80.0}{60.0}{0.0}
\drawdashline{20.0}{40.0}{100.0}{40.0}
\drawpath{34.0}{40.0}{60.0}{0.0}
\drawdashline{60.0}{80.0}{86.0}{40.0}
\drawhollowdot{60.0}{80.0}
\drawhollowdot{110.0}{40.0}
\drawhollowdot{82.0}{80.0}
\drawhollowdot{86.0}{40.0}
\drawhollowdot{34.0}{40.0}
\drawhollowdot{60.0}{0.0}
\drawcenteredtext{62.0}{83.0}{$c$}
\drawcenteredtext{112.0}{43.0}{$f$}
\drawcenteredtext{82.0}{83.0}{$e$}
\drawcenteredtext{88.0}{43.0}{$d$}
\drawcenteredtext{36.0}{43.0}{$a$}
\drawcenteredtext{62.0}{3.0}{$b$}
%\drawdotline{60.0}{80.0}{20.0}{0.0}
%\drawdotline{20.0}{0.0}{86.0}{40.0}
\drawdotline{82.0}{80.0}{20.0}{0.0}
\drawthickdot{41.8}{28.0}
\drawthickdot{51.0}{13.8}
\drawcenteredtext{16.0}{40.0}{$\frac12$}
\drawcenteredtext{16.0}{80.0}{$1$}
\drawpath{110.0}{1.0}{110.0}{-1.0}
\drawpath{86.0}{1.0}{86.0}{-1.0}
\drawpath{34.0}{1.0}{34.0}{-1.0}
\drawpath{82.0}{80.0}{110.0}{40.0}
\drawpath{100.0}{0.0}{118.0}{0.0}
\drawdotline{20.0}{0.0}{110.0}{40.0}
\drawcenteredtext{60.0}{-5.0}{$\frac12$}
\drawcenteredtext{16.0}{-5.0}{$O$}
\drawcenteredtext{100.0}{-5.0}{$1$}
\drawcenteredtext{34.0}{-5.0}{$\frac{d-\alpha}{2d}$}
\drawcenteredtext{120.0}{-5.0}{$\frac1r$}
\drawcenteredtext{110.0}{-5.0}{$\frac{3d-2\alpha}{2d}$}
\drawcenteredtext{86.0}{-5.0}{$\frac{d+\alpha}{2d}$}
\drawcenteredtext{46.0}{28.0}{$A$}
\drawcenteredtext{52.0}{18.0}{$B$}
\drawcenteredtext{16.0}{87.0}{$\frac1q$}
\end{picture}

}
\vspace{0.2in}

\caption{\small The line segments $[a,b]$  and
$[c,d]$ are the same as in Figure \ref{fig:pre-result}.
The line segment $[e,f]$ corresponds to $(\frac1{\widetilde q'},\frac1{\widetilde r'})+(0,\frac{d-\alpha}d)$.
For $(\frac1r,\frac1q)$ in the segment $[A,B]$ we can find admissible pairs $(\tilde q, \tilde r)$ satisfying the relation \eqref{hart-qr}. }
\end{figure}

 %figure%figure%figure%figure%figure%figure%figure%figure%figure%figure%figure%figure
 %figure%figure%figure%figure%figure%figure%figure%figure%figure%figure%figure%figure

Secondly, we consider the  $L^2$-critical Hartree equation, which
is given by for $2<\alpha <d$
\begin{eqnarray}\label{hartee}
\begin{cases}
iu_{t} + (-\Delta )^\frac\alpha 2 u=  \pm( |x|^{-\alpha}\ast |u|^2)u \\
u(0,x) = u_0(x) \in L^{2}(\mathbb{R}^d), \quad d\ge 3.
\end{cases}
\end{eqnarray}
One can easily check that the equation \eqref{hartee} is also $L^2$-critical, that is, invariant under $u\to u_{\la}$.
  One may be interested in a mass concentration for
the finite time blow-up solutions for \eqref{hartee}. The local wellposedness can be established by following the standard
argument. In fact, using the Strichartz estimates \eqref{striinho} and triangle inequality
\begin{eqnarray*}
& &\big\|\int_0^T e^{it(-\Delta)^{\frac{\alpha}{2}}(t-s)}
[(|x|^{-\alpha}\ast |u|^2)u-(|x|^{-\alpha}\ast |v|^2)v] ds\big\|_{\tx qr}
%\\
%&\le& \|(|x|^{-\alpha}\ast |u|^2)u-(|x|^{-\alpha}\ast |v|^2)v \|_{\tx
%{\tilde q'}{\tilde r'}}
\\
&\le& \|(|x|^{-\alpha}\ast [|u|^2-|v|^2])u\|_{\tx
{\tilde q'}{\tilde r'}}+\|(|x|^{-\alpha}\ast |v|^2)(v-u) \|_{\tx
{\tilde q'}{\tilde r'}}.
 \end{eqnarray*}
By H\"older's inequality and Hardy-Littlwood-Sobolev inequality the last of the above is bounded by
\begin{equation}\label{ok}
C\||u|^2-|v|^2\|_{\tx {q_1}{r_1}}\|u\|_{\tx qr}+C\|v\|_{\tx {2q_1}{2r_1}}^2\|u-v\|_{\tx qr}
\end{equation}
for $q_1,r_1$ satisfying
$(\frac 1{q_1}, \frac 1{r_1})+(\frac1q,\frac1r)=(\frac1{\widetilde q'},\frac1{\widetilde r'})+(0,\frac{d-\alpha}d).$
Let us take $q_1= \frac q 2$ and $r_1= \frac r 2$. Then we find that
the nonlinear map
\[u\to \propa u_0\mp i \int_0^t \duhamel (|x|^{-\alpha}\ast |u|^2)u \, ds\]
is a contraction if there is an $\alpha$-admissible pair $(\widetilde q, \widetilde r)$ such that
\begin{equation}\label{hart-qr}(\frac3q,\frac3r)=(\frac1{\widetilde q'},\frac1{\widetilde r'})+(0,\frac{d-\alpha}d).
\end{equation}
An easy calculation shows that  the line segment $[A,B]$ in Figure $2$ is parallel to the segment $[e,f]$
corresponding to the set $\{(1/{\widetilde q'},1/{\widetilde r'})+(0,(d-\alpha)/d):
 ( \widetilde{q}, \widetilde{r}) \text{ is } \al-admissible\}$, and moreover $|e-f|=3|A-B|$.
%Since $( q,  r)$ is $\alpha$-admissible, there is
%such $(\widetilde q, \widetilde r)$ if  the line from the line  passing through $(\frac1q, \frac1r)$ meets the lines %$\frac\alpha{ q}+\frac d{ r}=\frac{3d}2$ (see Figure $2$).
So it is possible to find $(\widetilde q, \widetilde r)$ satisfying \eqref{hart-qr} as long as
 $(q,r)$ is contained in $[A,B]$, that is,
\begin{equation}\label{r-gange-hart}
\frac{6d}{3d-\alpha}\le r\le \frac{6d}{3d-2\alpha}.
\end{equation}
For these $(q,r)$ we also get a blowup alternative; If  $T^*<\infty$, then \eqref{blowup-qr-1} should be satisfied.
As it was shown in \cite{ckh}, the mass concentration phenomenon is mostly involved with
the homogeneous part of the solution. The argument used in
\cite{bv, bo2} works for \eqref{hartee}
without much modifications if  the
nonlinear term can be controlled properly. This is actually  equivalent to showing the local wellposedness of
\eqref{hartee} under the condition \eqref{r-gange-hart}.

\begin{thm}\label{hartree} Let $d \geq 3$.
Let $(q,r)$ be an $\alpha$-admissible satisfying \eqref{r-gange-hart}.  Suppose that the solution $u$ of \eqref{hartee}
satisfies $\|u\|_{L_{t}^{q}L_{x}^{r}([0,t) \times \mathbb{R}^d)} <\infty$
for $0 < t < T^{*}<\infty$ and \eqref{blowup-qr-1}. Then \eqref{concent} holds. \end{thm}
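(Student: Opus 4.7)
The plan is to follow the proof of Theorem \ref{elliptic} nearly verbatim, since the mass concentration mechanism of \cite{bv, bo2, ckh} is driven entirely by the refinement of Strichartz's estimate for the free propagator $\propa$ (Proposition \ref{lem-con}), which does not depend on the particular form of the nonlinearity. The only place where the nonlinear term enters is a Duhamel absorption step at the level of local wellposedness, and for the Hartree equation \eqref{hartee} this is exactly what \eqref{ok} provides under the range \eqref{r-gange-hart}.

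First I would select a sequence $t_n\nearrow T^*$ together with times $a_n<t_n$ satisfying
\[\|u\|_{\tx q r([a_n,t_n]\times\bbr^d)}=\eta,\]
where $\eta=\eta(\|u_0\|_{L^2})>0$ is a small constant to be fixed below; the existence of such $a_n$ and the fact that $a_n\to T^*$ are standard consequences of \eqref{blowup-qr-1}. On the subinterval $I_n=[a_n,t_n]$, the Duhamel formula for \eqref{hartee} combined with the inhomogeneous Strichartz estimate and the bilinear bound \eqref{ok} (applied with $v=0$, $q_1=q/2$, $r_1=r/2$, and the admissible pair $(\widetilde q,\widetilde r)$ supplied by \eqref{hart-qr}) controls the Duhamel piece in $\tx q r(I_n\times\bbr^d)$ by $C\eta^{3}$, so for $\eta$ small in terms of $\|u_0\|_{L^2}$ the homogeneous part dominates:
\[\|e^{i(t-a_n)\alphla} u(a_n)\|_{\tx q r(I_n\times\bbr^d)} \ge \eta/2.\]

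Next I would invoke Proposition \ref{lem-con} to extract, from this lower bound, a dyadic frequency scale $2^{k_n}$ and a spatial center $x_n$ such that
\[\int_{B(x_n,c2^{-k_n})}|u(a_n,x)|^2\,dx\ge \epsilon(\|u_0\|_{L^2})>0,\]
with the scale relation $|I_n|\sim 2^{-\alpha k_n}$ dictated by the group-velocity heuristic described in the introduction. Applying the Duhamel smallness bound once more transports this mass lower bound from $u(a_n)$ to $u(t_n)$ on a ball of comparable radius, and since $2^{-k_n}\sim |I_n|^{1/\alpha}\le (T^*-t_n)^{1/\alpha}$, this is exactly \eqref{concent}.

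The main obstacle I expect is verifying that the Hartree Duhamel estimate behaves uniformly under the scaling implicit in the refined Strichartz argument: unlike the pure power nonlinearity, whose invariance under $u\to u_\lambda$ aligns exactly with \eqref{qr}, the Hartree term involves the extra Hardy-Littlewood-Sobolev convolution whose constant enters \eqref{ok}. The scale invariance of \eqref{hartee} combined with the scale-matched exponent choice $(q_1,r_1)=(q/2,r/2)$ renders this constant invariant under $u\to u_\lambda$, and once this is checked the transcription of the proof of Theorem \ref{elliptic} to the Hartree case is routine.
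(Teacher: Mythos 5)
Your overall strategy is the paper's: the only genuinely new ingredient for the Hartree case is the Duhamel absorption estimate, and you obtain it correctly from \eqref{ok} with $v=0$, $(q_1,r_1)=(q/2,r/2)$ and the admissible pair $(\widetilde q,\widetilde r)$ supplied by \eqref{hart-qr}, which bounds the inhomogeneous part by $C\eta^3$ on an interval where $\|u\|_{\tx qr}=\eta$. This is exactly the paper's estimate \eqref{claim}, after which the paper simply declares the remaining steps identical to those of Theorem \ref{elliptic}. (The closing remark about scale invariance of the Hardy--Littlewood--Sobolev constant is harmless but unnecessary: the estimate is applied on the fixed intervals $(t_n,t_{n+1})$, with no rescaling.)

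However, your middle paragraph, taken at face value, contains a step that would fail. Proposition \ref{lem-con} gives a lower bound on $\sup_k 2^{kd(\frac12-\frac1p)}\|\widehat{u(a_n)}\chi_{B_k}\|_p$, i.e.\ concentration of the \emph{Fourier transform} on a dyadic shell; it does not by itself produce a spatial center $x_n$ and a ball on which $u(a_n)$ carries mass bounded below. In the actual argument that passage requires Lemma \ref{square} (decomposition into pieces $f_\sigma$ with flat Fourier transforms), Lemma \ref{tube} (localizing the free evolution of the selected piece to finitely many space--time boxes of dimensions $2^{-k\alpha}\times(2^{-k})^d$), and the pigeonhole on $\mathcal H(t)$, which yields concentration of $u$ at an \emph{interior} time $t_0$ of the interval together with the lower bound $t_{n+1}-t_0\gtrsim \epsilon_2|I_0|$; it is this last bound that converts the spatial scale $2^{-k}$ into $(T^*-t_0)^{1/\alpha}$, not a group-velocity heuristic. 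Your alternative of ``transporting'' a localized $L^2$ lower bound from $a_n$ to $t_n$ via the $\tx qr$ smallness of the Duhamel term is not a valid operation: smallness in $\tx qr$ over a space--time slab does not control the localized mass of $u$ at a fixed time. Since you announce at the outset that you will follow the proof of Theorem \ref{elliptic} verbatim, this is repairable by actually doing so; but as sketched, the extraction of the concentration ball is the gap.
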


The paper is organized as follows. In Section 2 we obtain
some preliminary  estimates which are to be used for the proofs of
Theorems. In Section 3 we give the proofs of Theorems \ref{elliptic} and
\ref{hartree}.

%%%%%%%%%%%%%%%%%%%%%%%%%%%%%%%%%%%%%%%%%%%%%%%%%%%%%%%%%%%%%%%%%%%%%%%%%%%%%%%%%%%%%%%%%%%%%%%%%%%
%%%%%%%%%%%%%%%%%%%%%%%%%%%%%%%%%%%%%%%%%%%%%%%%%%%%%%%%%%%%%%%%%%%%%%%%%%%%%%%%%%%%%%%%%%%%%%%%%%%

\section{preliminary}
In this section we show several lemmas which will be used later
for the proofs of the theorems.
For $q,r\ge 2$, $r\neq \infty$ and $\frac2q+\frac dr\le \frac d2$, set
\[\beta=\beta(\alpha,q,r)=\frac d2-\frac dr-\frac\alpha q.\]
Let $\rho$ be a smooth function
supported in $[1/2,4]$ and satisfying $\sum_{-\infty}^\infty\rho(x/2^k)=1$ for all $x>0$.
Then we define a projection operator by
\[ \widehat {P_k f}(\xi)=\rho(|\xi|/2^k) \widehat f(\xi).\]

The following lemma is a version of Strichartz estimates for Schr\"odinger equations of higher orders $\alpha$ with $\alpha > 2$.
It seems well known  but for a convenience of the readers we include the proof.
The arguments are based on rescaling  and  Littlewood-Paley theorem.

\begin{lem}\label{stri} For $q,r\ge 2$, $r\neq \infty$ and $\frac 2 q+\frac dr\le \frac d2$,
\begin{equation}\label{vect}
\|\propa f\|_{L^q_tL^r_x}\le C \, (\sum_k 2^{2k(\frac d2-\frac dr-\frac\alpha q)}\| P_k f\|_2^2)^\frac12.
\end{equation}
In particular, if $(q,r)$ is $\alpha$-admissible, then
\begin{equation}\label{strihomo}
\|\propa f\|_{L^q_tL^r_x}\le C \| f\|_2.
\end{equation}
Also if $(q,r)$ and $(\tilde q, \tilde r)$ are $\alpha$-admissible, then we have
\begin{equation}\label{striinho}
\|\int_0^t\duhamel F(s) ds\|_{L^q_tL^r_x}\le C  \, \| F\|_{\mix {\tilde q'}{\tilde r'}}.
\end{equation}
\end{lem}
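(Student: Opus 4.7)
The plan is to reduce \eqref{vect} to a Strichartz estimate at unit frequency, rescale each dyadic piece, and then assemble via Littlewood--Paley; the other two assertions are direct corollaries.

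First I would establish the frequency-localized dispersive bound: for $\widehat f$ supported in a shell $\{|\xi|\sim 1\}$,
\[
\|\propa f\|_{L^\infty_x}\le C(1+|t|)^{-d/2}\|f\|_{L^1_x}.
\]
The phase $x\cdot\xi+t|\xi|^\al$ has $\xi$-Hessian equal to $t$ times the Hessian of $|\xi|^\al$, whose eigenvalues $\al|\xi|^{\al-2}$ (tangential, multiplicity $d-1$) and $\al(\al-1)|\xi|^{\al-2}$ (radial) are non-degenerate for $\al>2$, so stationary phase delivers the $|t|^{-d/2}$ decay for $|t|\ge 1$, while Hausdorff--Young covers $|t|\le 1$. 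Feeding this into the Keel--Tao abstract machinery yields the unit-frequency Strichartz bound
\[
\|\propa P_0 f\|_{L^q_tL^r_x}\le C\|P_0 f\|_2
\]
for all $q,r\ge 2$, $r\neq\infty$ with $\tfrac 2q+\tfrac dr\le\tfrac d2$.

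Next I would rescale. Writing $P_kf(x)=g_k(2^kx)$ with $\widehat{g_k}$ supported in a unit shell and changing variables by $\widetilde t=2^{k\al}t$, $\widetilde x=2^k x$, one computes
\[
\|\propa P_k f\|_{L^q_tL^r_x}=2^{-k(d/r+\al/q)}\|e^{i\widetilde t\alphla}g_k\|_{L^q_{\widetilde t}L^r_{\widetilde x}},\qquad \|P_kf\|_2=2^{-kd/2}\|g_k\|_2,
\]
so the unit-frequency bound gives $\|\propa P_k f\|_{L^q_tL^r_x}\le C\,2^{k\betaal}\|P_k f\|_2$. Since $q,r\ge 2$, Littlewood--Paley's square function theorem in $L^r_x$ followed by Minkowski in $\ell^2$ inside $L^q_tL^r_x$ produces
\[
\|\propa f\|_{L^q_tL^r_x}\le C\Big(\sum_k\|\propa P_k f\|_{L^q_tL^r_x}^2\Big)^{1/2},
\]
and inserting the previous estimate yields \eqref{vect}. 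The homogeneous bound \eqref{strihomo} is the specialization $\betaal=0$, since then $\sum_k\|P_kf\|_2^2\sim\|f\|_2^2$ by Plancherel. Finally \eqref{striinho} follows from \eqref{strihomo} by a standard $TT^*$/duality argument together with the Christ--Kiselev lemma to pass from the untruncated time integral to the retarded one from $0$ to $t$.

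The only real analytic content is the dispersive estimate in the first step; once the non-degeneracy of the Hessian of $|\xi|^\al$ on the unit shell is recorded, stationary phase gives the sharp $d/2$ decay uniformly (the symbol is rotation-invariant, so the constant does not depend on the location of the shell), and everything else is bookkeeping. The reduction to unit frequency is precisely what sidesteps the singularity of $|\xi|^\al$ at the origin for $\al>2$.
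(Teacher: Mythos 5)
Your proof is correct and follows essentially the same route as the paper's: a unit-frequency dispersive estimate via stationary phase, the Keel--Tao machinery, rescaling of each dyadic piece, and Littlewood--Paley plus Minkowski, with \eqref{strihomo} and \eqref{striinho} obtained as corollaries via Plancherel and duality/Christ--Kiselev. The only difference is cosmetic: you record the Hessian eigenvalues of $|\xi|^{\al}$ explicitly where the paper simply cites Stein for the $t^{-d/2}$ decay.
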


\begin{proof} Once we get \eqref{vect}, then \eqref{strihomo} follows from
Plancherel's theorem. Also \eqref{striinho} can be shown by duality and the argument due to Christ and Kiselev (\cite{ck}).

We now show \eqref{vect}. Since $\alpha>2$, by the stationary phase method (see p.344 in \cite{st}),
we see $\|\propa \psi \|_{L^\infty_x}\le C \, t^{-\frac d2}$ for any $\psi$ with compact support contained in
 $\mathbb R^d\setminus \{0\}$. Hence, from the argument of Keel-Tao in \cite{kt},
 we have \begin{equation}\label{p0}
 \|\propa P_0 f\|_{\mix{q}{r}}\le C\|f\|_2
 \end{equation}
 whenever $\frac dr+\frac 2 q\le \frac d2$, and $r,q\ge 2$ (with exception $r\neq \infty$ when $d=2$).   Then by rescaling  we observe that
 \begin{equation*}
 \propa P_{k} f(x)= e^{i2^{\alpha k} t\alphla}P_0[f(\frac\cdot{2^k})](2^k x).
 \end{equation*}
 Therefore it follows that
 \begin{equation}\label{rescale}
 \|\propa P_{k} f\|_{\mix q r} \le C2^{(-\frac dr-\frac \alpha q+\frac d2)k}\|f\|_2.
 \end{equation}
 Since $f=\sum_{k} P_kf$ and  $q,r\ge 2$, from Littlewood-Paley theorem followed by Minkowski's inequality
 we have
 \begin{align*}
 \|\propa f\|_{\mix q r}  &\le C(\sum_k\|\propa P_kf\|_{\mix q r}^2)^\frac12.
 \end{align*}
Putting \eqref{rescale} in the right hand side of the above, we get the desired.
\end{proof}

\subsection{Refinement of Strichartz's estimates}

\begin{lem}\label{inter} Let $(q,r)$ satisfy $q>2,r\ge 2$, $r\neq \infty$ and $\frac2q+\frac dr< \frac d2$. If $M\le N$
 then there is $\epsilon=\epsilon(q,r)>0$ such that
 \begin{equation*} \label{better}\|\propa P_Nf\propa P_Mf \|_{\mix {q/2}{r/2}}\le C2^{(M+N)\betaal}\left(2^{M-N}\right)^{\epsilon} \|f\|_2\|g\|_2.\end{equation*}
\end{lem}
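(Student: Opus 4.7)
My plan is to combine a bilinear $L^2_{t,x}$ estimate---the new ingredient, reflecting transversality of the two frequency shells---with the linear Strichartz bound from Lemma~\ref{stri}, and then to interpolate in order to reach the target mixed-norm pair $(q/2, r/2)$. The strict inequality $\frac{2}{q}+\frac{d}{r}<\frac{d}{2}$ provides the margin needed for the interpolation to produce a positive gain $\epsilon$.

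First I would establish the bilinear $L^2$ estimate
$$\|\propa P_N f\cdot\propa P_M g\|_{L^2_{t,x}}\lesssim 2^{(d-1)M/2-(\alpha-1)N/2}\|f\|_2\|g\|_2.$$
Setting $u=\propa P_N f$ and $v=\propa P_M g$ and applying Plancherel in $(t,x)$, the spacetime Fourier transform $\widehat{uv}(\xi,\tau)$ becomes an integral over the level set $\Sigma_{\xi,\tau}:=\{\xi_1:|\xi_1|^\alpha+|\xi-\xi_1|^\alpha=\tau\}$ of the phase $\phi(\xi_1):=|\xi_1|^\alpha+|\xi-\xi_1|^\alpha$. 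Cauchy--Schwarz on $\Sigma_{\xi,\tau}$ and the coarea formula reduce the problem to the bound $\sup_{\xi,\tau}\int_{\Sigma_{\xi,\tau}}|\nabla\phi|^{-1}\,d\mathcal{H}^{d-1}\lesssim 2^{(d-1)M-(\alpha-1)N}$. The key geometric observation is that
$$|\nabla\phi|=\alpha\bigl||\xi_1|^{\alpha-2}\xi_1-|\xi-\xi_1|^{\alpha-2}(\xi-\xi_1)\bigr|\gtrsim 2^{(\alpha-1)N}$$
whenever $|\xi_1|\sim 2^N$ strictly dominates $|\xi-\xi_1|\sim 2^M$, while the area of $\Sigma_{\xi,\tau}$ is $\lesssim 2^{(d-1)M}$ since $\Sigma_{\xi,\tau}$ is confined to the translate of a $2^M$-annulus.

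Next I would invoke H\"older's inequality together with Lemma~\ref{stri} to get the trivial bilinear bound
$$\|\propa P_N f\cdot\propa P_M g\|_{L^{q_0/2}_t L^{r_0/2}_x}\le C\,2^{(M+N)\beta(\alpha,q_0,r_0)}\|f\|_2\|g\|_2$$
valid for any $(q_0,r_0)$ with $\frac{2}{q_0}+\frac{d}{r_0}\le\frac{d}{2}$. Using the log-convexity of mixed $L^p$ norms in the exponent pair, applied to the bilinear form $(f,g)\mapsto uv$, I would then interpolate this estimate against the bilinear $L^2$ bound. The strictness of $\frac{2}{q}+\frac{d}{r}<\frac{d}{2}$ lets me place $(1/q,1/r)$ strictly between $(1/2,1/2)$ and a suitable $(1/q_0,1/r_0)$ in the admissible region, so the interpolation parameter sits in $(0,1)$; the asymmetry between the $M$- and $N$-exponents of the $L^2$ bound then survives interpolation and yields the factor $(2^{M-N})^\epsilon$ with $\epsilon=\epsilon(q,r)>0$, while the symmetric factor $2^{(M+N)\beta(\alpha,q,r)}$ captures the overall scaling.

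The main obstacle is the lower bound $|\nabla\phi|\gtrsim 2^{(\alpha-1)N}$ when $M$ and $N$ are comparable, as then $|\xi_1|^{\alpha-2}\xi_1$ and $|\xi-\xi_1|^{\alpha-2}(\xi-\xi_1)$ can nearly cancel. In that regime I would either refine the shells into angular caps to force $\xi_1$ and $\xi-\xi_1$ into well-separated directions, or simply observe that when $M$ coincides with $N$ the claim reduces to the trivial H\"older--Strichartz bound since $(2^{M-N})^\epsilon=1$; so one can restrict attention to the case $M<N$ with enough separation to make the Jacobian estimate effective.
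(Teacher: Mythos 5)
Your proposal is correct and follows essentially the same route as the paper: a bilinear $L^2_{t,x}$ estimate with the asymmetric gain $2^{(d-1)M/2-(\alpha-1)N/2}$ (your coarea/level-set computation is the paper's change of variables $(\xi,\eta_1)\mapsto(\xi+\eta,|\xi|^{\alpha}+|\eta|^{\alpha})$ with Jacobian $\alpha\bigl|\eta_1|\eta|^{\alpha-2}-\xi_1|\xi|^{\alpha-2}\bigr|\gtrsim 2^{(\alpha-1)N}$ in disguise), interpolated against the trivial H\"older--Strichartz bound, with the case of comparable frequencies handled trivially. The only point to make explicit is that the surface-measure bound $\mathcal{H}^{d-1}(\Sigma_{\xi,\tau})\lesssim 2^{(d-1)M}$ is not automatic from confinement to the $2^M$-annulus alone but follows from the same gradient computation (the direction of $\nabla\phi$ is essentially constant there), which is exactly what the paper's one-dimensional change of variables in $\eta_1$ after freezing $\bar\eta$ accomplishes.
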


This means that  it is possible to obtain better bounds than the one trivially obtained by
rescaling (Lemma \ref{stri}) when  the waves interact at different frequency levels.
Such observation was first made by Bourgain \cite{bo2}.

\begin{proof}
By rescaling it is enough to show that
\begin{equation}
\label{better}\|\propa P_0f\propa P_{M-N}f \|_{\mix {q/2}{r/2}}\le C\left(2^{M-N}\right)^{\betaal+\epsilon} \|f\|_2\|g\|_2.\end{equation}

Let us set $L=M-N\le 0$. Hence Fourier supports of $P_0 f$, $P_L f$ are contained in
the sets $\{|\xi|\sim 1\}$, $\{|\xi|\sim 2^{L}\}$, respectively.
For $\frac dr+\frac2q\le \frac d2$, and $r,q\ge 2$, by H\"older's inequality and \eqref{rescale}
one can see
\[\|\propa P_0f\propa P_Lg\|_{\mix {q/2}{r/2}}\le C2^{L\betaal}\|f\|_2\|g\|_2.\]
If one interpolates this with
\begin{equation}
\label{l2}
\|\propa P_0f\propa P_Lg\|_{\mix 2 2}\le C2^{L(d-1)/2}\|f\|_2\|g\|_2
\end{equation}
which will be proven later,
one get the desired estimat \eqref{better}. Indeed, note that the bound
in the above is better than the trivial bounds follows from rescaling.
That is,
\[2^{L\beta(\alpha,4,4)}=2^{L(d/4-\alpha/4)}>  2^{L(d-1)/2}=2^{L(\beta(\alpha,4,4)+\epsilon)}\]
for some $\epsilon>0$ because $\alpha, d\ge 2$.
Hence via interpolation  we get the desired estimate
\[\|\propa P_0f\propa P_Lg\|_{\mix {q/2}{r/2}}\le C2^{L(\betaal+\epsilon)}\|f\|_2\|g\|_2\]
with some $\epsilon>0$ as long as $d/r+2/q < d/2$ and $q>2$.
\end{proof}

\begin{proof}[Proof of \eqref{l2}] We may assume that  $\widehat f$ is supported in the set $\{\xi:|\xi|\sim 1\}$. When $2^L\sim 1$,
 the estimate \eqref{l2} is trivial from \eqref{rescale} and H\"older's inequality.  So we also may assume
$2^L\ll 1$.

By decomposing  the  Fourier support of $f$ into finite number of
sets, rotation and  mild dilation,
it is enough to show that
\[\|\propa f\propa g\|_{\mix {2}{2}}\le C\lambda^{(d-1)/2}\|f\|_2\|g\|_2\]
whenever $\widehat f$  is supported in $B(e_1, \epsilon)$  and $\widehat g$ is supported in $\{|\xi|\sim 2^{L}\}$. Here $B(x,r)$ is the open ball centered at $x$ with
radius $r$.
We write
\[ \propa f(x)\propa g(x)=\iint e^{i(x(\xi+\eta)+t(|\xi|^\alpha+|\eta|^\alpha))} \widehat f (\xi)\widehat g(\eta) d\xi d\eta.\]
Freezing $\bar\eta=(\eta_2,\dots,\eta_d)$, we  consider an operator
\[B_{\bar\eta}(f,g)=\iint e^{i(x(\xi+\eta)+t(|\xi|^\alpha+|\eta|^\alpha))} \widehat f (\xi)\widehat g(\eta_1,\bar\eta) d\xi d\eta_1.\]
We now make the change of variables
\[\zeta=(\zeta_1, \zeta_2,\dots, \zeta_{d+1})=(\xi+\eta,|\xi|^\alpha+|\eta|^\alpha).\]
Then by a direct computation  one can see that
\[\left|\frac{\partial \zeta}
{\partial(\xi,\eta_1)}\right|=\alpha\big|\eta_1|\eta|^{\alpha-2}-\xi_1|\xi|^{\alpha-2}\big|\sim 1\]
on the supports of $\widehat f$ and $\widehat g$. Hence making change of variables $(\xi,\eta_1)\to \zeta$, applying Plancherel's theorem and
reversing the change variables  ($\zeta \to (\xi,\eta_1)$), we get
\[\|B_{\bar\eta}(f,g)\|_{\mix 2 2}\le C\|\widehat f(\xi) \widehat g(\eta_1,\bar\eta)\|_{\mi 2}. \]
Since
\[\propa f(x)\propa g(x)=\int  B_{\bar\eta}(f,g(\cdot, \bar\eta)) d\bar\eta,\]
by Minkowski's inequality we see
\[\|\propa f\propa g\|_{\mix 2 2} \le C \int  \|\widehat f(\xi) \widehat g(\eta_1,\bar\eta)\|_{\mi 2} d\bar\eta. \]
This gives the desired bound by Schwartz's inequality, because of $|\bar \eta|\le 2^{L}$.
\end{proof}

\begin{prop}\label{lem-con} If $(q,r)$ is an $\alpha$-admissible with $q>2$, there are $\theta\in (0,1)$ and $1\le p<2$ such that
\[\|\propa f\|_{\mix q r}\le C \left(\sup_{k}
2^{kd(\frac 12-\frac1p)}\|{ \widehat f\chi_{B_k}}\|_p\right)^\theta \|f\|_2^{1-\theta}.\]
Here $B_k=\{\xi:2^{k-1}<|\xi|\le 2^k\}$.
\end{prop}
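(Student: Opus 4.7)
The plan is to combine a Littlewood-Paley decomposition with the bilinear gain from Lemma \ref{inter}, and then to extract the supremum factor by interpolating the $\ell^2$-norm bound against a simple pointwise bound on the shell pieces.

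First, I would square the norm on the left hand side and use that $q,r\ge 2$ together with the identity
\begin{equation*}
\|\propa f\|_{\mix q r}^2=\|(\propa f)(\overline{\propa f})\|_{\mix{q/2}{r/2}}.
\end{equation*}
Writing $\propa f=\sum_k \propa P_k f$ and applying Minkowski's/triangle inequality in the mixed norm (permissible since $q,r\ge 2$), this is at most $\sum_{M,N}\|\propa P_M f\cdot \overline{\propa P_N f}\|_{\mix{q/2}{r/2}}$. Symmetry lets me restrict to $M\le N$, and since $(q,r)$ is $\alpha$-admissible we have $\beta(\al,q,r)=0$, so Lemma \ref{inter} gives
\begin{equation*}
\|\propa P_M f\cdot \overline{\propa P_N f}\|_{\mix{q/2}{r/2}}\le C\,2^{-(N-M)\epsilon}\|P_M f\|_2\|P_N f\|_2,
\end{equation*}
where $\epsilon>0$ depends only on $(q,r)$.

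Set $a_k=\|P_k f\|_2$. By Plancherel and Hölder's inequality on the frequency side, applied to the annulus $B_k$ of measure $\sim 2^{kd}$,
\begin{equation*}
a_k\le C\,|B_k|^{\,1/2-1/p}\|\widehat f\chi_{B_k}\|_p=C\,2^{kd(1/2-1/p)}\|\widehat f\chi_{B_k}\|_p\le C\,\mathcal A,
\end{equation*}
where $\mathcal A=\sup_k 2^{kd(1/2-1/p)}\|\widehat f\chi_{B_k}\|_p$, while $\sum_k a_k^2\approx \|f\|_2^2$ by Littlewood-Paley. Thus the task reduces to controlling $\sum_{M\le N}2^{-(N-M)\epsilon}a_M a_N$ by $\mathcal A^{2\theta}\|f\|_2^{2(1-\theta)}$ for some $\theta\in(0,1)$.

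To do this I would interpolate: write $a_Ma_N=(a_Ma_N)^\theta(a_Ma_N)^{1-\theta}$, use $a_M,a_N\le \mathcal A$ in the first factor to produce $\mathcal A^{2\theta}$, and then handle $\sum 2^{-|M-N|\epsilon}(a_Ma_N)^{1-\theta}$ via Schur's test on the exponentially decaying kernel $K_{M,N}=2^{-|M-N|\epsilon}$, which is bounded on every $\ell^s$. The final step calls for comparing the resulting $\ell^{2(1-\theta)}$-type quantity to $\|f\|_2^{2(1-\theta)}$; this is the main obstacle, and it is precisely the reason one is forced to take $p<2$ rather than $p=2$. Indeed, lowering $p$ increases $\mathcal A$ (relative to $\sup_k a_k$), which is what provides the slack needed to absorb the mismatch between $\ell^{2(1-\theta)}$ and $\ell^2$; once $p$ is fixed small enough in $[1,2)$ and $\theta$ is taken small enough in $(0,1)$ in terms of the gain $\epsilon$ from Lemma \ref{inter}, the bookkeeping closes and taking square roots yields the desired estimate.
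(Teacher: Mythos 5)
Your argument has two genuine errors, either of which is fatal. First, the H\"older step is backwards: for $p<2$ and a set $B_k$ of measure $\sim 2^{kd}$, H\"older gives $2^{kd(\frac12-\frac1p)}\|\widehat f\chi_{B_k}\|_p\le C\|\widehat f\chi_{B_k}\|_2=Ca_k$, \emph{not} $a_k\le C\mathcal A$. The quantity $\mathcal A$ is genuinely smaller than $\sup_k a_k$ (that is the whole point of the refinement: a small $\mathcal A$ with $p<2$ detects concentration of $\widehat f$ inside a shell, which $\sup_k a_k$ cannot), so the factor $\mathcal A^{2\theta}$ cannot be extracted by writing $(a_Ma_N)^\theta\le\mathcal A^{2\theta}$. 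For the same reason your closing remark is inverted: lowering $p$ \emph{decreases} $\mathcal A$ and makes the claimed inequality stronger, so there is no slack to be gained there. Second, even if you replace $\mathcal A$ by $\sup_k a_k$ (the $p=2$ case), the bookkeeping does not close: Schur's test on the kernel $2^{-|M-N|\epsilon}$ applied to $(a_Ma_N)^{1-\theta}$ yields $C\sum_k a_k^{2(1-\theta)}=C\|a\|_{\ell^{2-2\theta}}^{2-2\theta}$, and since $2-2\theta<2$ this \emph{dominates} $\|a\|_{\ell^2}^{2-2\theta}=\|f\|_2^{2-2\theta}$ rather than being controlled by it. Your triangle-inequality decomposition $\sum_{M,N}\|\propa P_Mf\cdot\overline{\propa P_Nf}\|_{\mix{q/2}{r/2}}$ can only reproduce the ordinary Strichartz bound $C\|f\|_2^2$.

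What is missing are the two upgrades the paper actually proves. To beat $\ell^2$ in $k$ one needs an estimate of the form \eqref{l4}, i.e. $\|\propa f\|_{\mix qr}\le C(\sum_k\|\widehat{P_kf}\|_2^{\tilde q})^{1/\tilde q}$ with some $\tilde q>2$; this is obtained at $q=r=4$ by combining Lemma \ref{inter} with the almost-orthogonality (Plancherel) of the Fourier supports of $\propa P_kf\cdot\propa P_{k+j}f$ for $j\ge3$, which converts the sum over $k$ into an $\ell^2$ sum of squares and hence an $\ell^4$ norm of the $a_k$ --- precisely the step your triangle inequality discards. To reach $p<2$ one needs, separately, the estimate \eqref{lp}, $\|\propa P_kf\|_{\mix qr}\le C2^{k(d-\frac dp-\frac\alpha q-\frac dr)}\|\widehat{P_kf}\|_p$ for some $p<2$, which comes from interpolating the dispersive bound \eqref{p0} with the trivial $L^1_\xi\to L^\infty_x$ bound (using $\alpha>2$ for the room); no application of H\"older can substitute for this. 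The supremum is then extracted by interpolation in the weighted sequence spaces among \eqref{vect}, \eqref{l4} and \eqref{lp}, not by splitting $a_Ma_N$ pointwise.
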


\begin{proof}[Proof of Proposition \ref{lem-con}]
In fact, for the proof it is sufficient to show
that \[\|\propa f\|_{\mix q r}\le C \left(\sup_{k}
2^{kd(\frac 12-\frac1p)}\|\widehat{P_k f}\|_p\right)^\theta \|f\|_2^{1-\theta}.\]
By dividing the support of $\widehat{P_k f}$ into three dyadic shells $B_{k-1}, B_k$, and $B_{k+1}$,
we get the desired. This actually can be shown by using \eqref{vect} and the following two estimates:

If  $(q,r)$ is an $\alpha$-admissible with $ q>2$, then
\begin{align}\label{l4}
\|\propa f\|_{\mix { q} {r}}&\le C(\sum_k \|\widehat {P_k f}\|_2^{ \tilde q})^\frac1{ \tilde q},
\end{align}
and
\begin{align}
\label{lp}\|\propa f\|_{\mix {q} {r}}&\le C(\sum_k (2^{k(\frac d2-\frac d{\tilde p})}\|\widehat {P_k f}\|_{\tilde p})^2)^\frac12
\end{align}
with some $\widetilde p<2<\widetilde q$. Interpolation among \eqref{vect} and these two estimates
gives
\begin{equation}\label{mixed} \|\propa f\|_{\mix q r}
\le C(\sum_k (2^{k(\frac d2-\frac d{p_*})}\|\widehat {P_k f}\|_{p_*})^{q_*})^\frac1{q_*}\end{equation}
as long as  $(1/p_*, 1/q_*)$ is contained in the triangle $\Gamma$
with vertices $(1/2,1/2), (1/2, 1/\tilde q)$ and $(1/\tilde p,1/2)$.
Obviously one can find a point $(1/p_0,1/q_0)$ contained in the interior of $\Gamma$ so that
it lies  on the line segment joining  $(1/2,1/2)$ and $(1/ p,0)$  for some  $p<2$. Then
by interpolation among the mixed norm spaces\footnote{Here the mixed norm spaces are given with the norm
$(\sum_{k} (2^{k(\frac d2-\frac ds)}\|f_k\|_s)^t)^{\frac1t}$.}(\cite{bl})
we see
\[ (\sum_k (2^{k(\frac d2-\frac d{p_0})}\|\widehat {P_k f}\|_{p_0})^{q_0})^\frac1{q_0}\le C\left(\sup_{k}
2^{kd(\frac 12-\frac1p)}\|\widehat{P_k f}\|_p\right)^\theta \left(\sum_k (\|\widehat {P_k f}\|_{2}^{2})^\frac1{2}\right)^{1-\theta}.\]
Therefore, using \eqref{mixed} which is valid with $(q_*,p_*)=(q_0,p_0)$ together with the above and Plancherel's theorem we get the desired inequality.
Now it remains to show \eqref{l4} and \eqref{lp}.

We first show \eqref{lp} which is easier. Note that $\alpha>2$. By interpolation between \eqref{p0} and the trivial $L^1\to L^\infty$ bound, one can see that for each $\alpha$-admissible $(q,r)$, $q>2$, there is a $p<2$ such that
  \[\|\propa P_0 f\|_{\mix{q}{r}}\le C\|\widehat {P_0 f}\|_p. \]
 Here we used the fact that $\alpha>2$. Then by rescaling we see that
\[\|\propa P_k f\|_{\mix{q}{r}}\le C2^{k(d-\frac dp-\frac\alpha q-\frac dr)}\|\widehat {P_k f}\|_p. \]
By using  Littlewood-Paley theorem,  Minkowski's inequality and the above  we
get
\begin{align*}
\|\propa  f\|_{\mix{q}{r}}&\le C(\sum_k \|\propa P_k f\|_{\mix{q}{r}}^2)^\frac12\\
&\le C(\sum_k
2^{2k(d-\frac dp-\frac\alpha q-\frac dr)}\|\widehat {P_k f}\|_p^2)^\frac12.
\end{align*}
In particular, when $(q,r)$ is $\alpha$-admissible we get \eqref{lp}.

\

Now we turn to \eqref{l4}. We start with
the inequality \eqref{vect} which reads as
\[\|\propa f\|_{\mix { q} {r}}\le C(\sum_k 2^{2k\betaal}\|\widehat {P_k f}\|_2^{2})^\frac1{2}\]
for  $q,r\ge 2$, $r\neq \infty$ and $\frac2q+\frac dr\le \frac d2$.
However in the right hand side the norm in $k$ is $\ell^2$.
We need to upgrade this slightly so that the norm in $k$ is replaced by $\ell^{\tilde q}$ for some $\tilde q>2$.
To do this it is enough to show that there is a pair $(q,r)$ satisfying $q,r\ge 2$, $r\neq \infty$
and $\frac2q+\frac dr\le \frac d2$, such that
\begin{equation}
\label{raised}
\|\propa f\|_{\mix { q} {r}}\le C(\sum_k (2^{k\betaal}\|\widehat {P_k f}\|_2)^{\tilde q})^\frac1{\tilde q}
\end{equation}
for some $\tilde q>2$. The interpolation between this and \eqref{vect} gives the desired. In particular
when $(q,r)$ is $\alpha$-admissible we get \eqref{l4}.

We show \eqref{raised} with $q=r=4$ and $\tilde q=4$.
We write
\[\|\propa f\|_{\mix {4}{4}}^2=\|\propa f\propa f\|_{\mix {2} {2}}^2\]
and
\[\propa f\propa f=\sum_{k\le l}\propa P_k f\propa P_l f+\sum_{k> l}\propa P_k f\propa P_l f.\]
Then by triangle inequality
\begin{align*}
\|\propa f\propa f\|_{\mix {2} {2}}\le & \sum_{j\ge 0}\|\sum_{k}\propa P_k f\propa P_{k+j} f\|_{\mix {2} {2}}\\
+\sum_{j>0}&\|\sum_{k}\propa P_{k+j} f\propa P_k f\|_{\mix {2} {2}}.
\end{align*}
By symmetry it is enough to deal with the first one because the second can be handled similarly. Hence it is enough to show that
\begin{equation}
\label{j}
\|\sum_{k}\propa P_k f\propa P_{k+j} f\|_{\mix {2} {2}}\le C2^{-\epsilon j}
(\sum_k 2^{4k\beta(\al,4,4)} \|\widehat{P_k f}\|_2^{4})^\frac2{ 4}
\end{equation}
for some $\epsilon>0$.
We consider separately the cases $j=0,1,2$ and $j\ge 3$.

First we handle the case $j=0,1,2$. By Cauchy-Schwarz's inequality we have
\begin{align*}
|\sum_{k}\propa P_k f\propa P_{k+j} f|& \le \sum_{k}|\propa P_k f|^2.
\end{align*}
So, squaring both sides we get
\begin{align*}
|\sum_{k}\propa P_k f\propa P_{k+j} f|^2&\le
C\sum_{l\ge 0}\sum_{k}|\propa P_k f\propa P_{k+l} f|^2.
\end{align*}
Hence it follows  that
\begin{align*}
\|\sum_{k}\propa P_k f\propa P_{k+j} f\|_{\mix {2} {2}}^2
& \le C\sum_{l\ge 0}\sum_{k}\|\propa P_k f\propa P_{k+l} f\|_{\mix {2} {2}}^2.
\end{align*}
Then by Lemma \ref{inter} we see
\begin{eqnarray*}
\sum_{l\ge 0}\sum_{k}\|\propa P_k f\propa P_{k+l} f\|_{\mix {2} {2}}^2\\
\le C\sum_{l\ge 0}2^{-\epsilon l}\sum_{k}&2^{2k\beta(\alpha,4,4)}\| P_k f\|_2^2\ 2^{2(k+l)\beta(\alpha,4,4)}\|P_{k+l} f\|_2^2.\end{eqnarray*}
%and  by \eqref{rescale} \[ \sum_{k}\| \propa P_k f \|^4_{\tx{4}{4}} \le \sum_k 2^{4k\beta(\al,4,4)}\|P_k f\|_2^4.\]
Therefore by Schwarz's inequality and summation in $l$ we get
\[\|\sum_{k}\propa P_k f\propa P_{k+j} f\|_{\mix {2} {2}}^2\le C(\sum_{k}(2^{k\beta(\alpha,4,4)}\| P_k f\|_2)^4).\]

Now we turn to case $j\ge 3$. Observe the Fourier supports of
\[\propa P_k f\propa P_{k+j}, \quad -\infty<k<\infty\] are boundedly overlapping. Hence by Plancherel's theorem,
we see that
\[\|\sum_{k}\propa P_k f\propa P_{k+j} f\|_{\mix {2} {2}}^2
\le C \sum_{k}\|\propa P_k f\propa P_{k+j} f\|_{\mix {2} {2}}^2.
\]
Using Lemma \ref{inter}, the right hand side is bounded by
\[C \, 2^{-2\epsilon j}\sum_{k}2^{2k\beta(\alpha,4,4)}\| P_k f\|_2^2 \  2^{2(k+j)\beta(\alpha,4,4)}\|P_{k+j} f\|_2^2.\]
Therefore, Schwarz's inequality gives us the desired bound \eqref{j}.
\end{proof}

Proposition \ref{lem-con} can be combined with the following elementary lemma to find out
the region where the given $L^2$ function is not severely concentrating but still containing a moderate amount of mass.

\begin{lem}\label{nonconcent} Let $\ep>0$, $f\in L^2(\bbr^2)$ and suppose that there is a measurable subset $Q$ such that
\[ \ep\le (|Q|^{\frac12-\frac1p}\|f\chi_Q\|_p)^\theta\|f\|_2^{1-\theta}\]
for some $\theta\in (0,1)$ and $p\in [1,2)$. Then
if
$\lambda\sim |Q|^{-\frac12} \ep^{-\frac p{\theta(2-p)}}\|f\|_2^{\frac p{\theta(2-p)}+1}$,
then $f_Q^\lambda=f\chi_{\{x\in Q: |f|\le \lambda\}}$ satisfies
\[\ep^\frac1\theta\|f\|_2^{1-\frac1\theta}\lesssim |Q|^{\frac12-\frac1p}\|f_Q^\lambda\|_p\le \|f_Q^\lambda\|_2.\]
Here all the implicit constants are independent of $f$, $Q$, $\ep$ and $\lambda$.
\end{lem}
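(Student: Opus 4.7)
The plan is to treat the statement as a standard high--low decomposition argument. First I would rewrite the hypothesis as
\[\ep^{1/\theta}\|f\|_2^{1-1/\theta} \le |Q|^{\frac12-\frac1p}\|f\chi_Q\|_p,\]
so the task reduces to showing that truncating $f$ at level $\lambda$ on $Q$ loses only a controlled fraction of this quantity. The right-hand inequality $|Q|^{\frac12-\frac1p}\|f_Q^\lambda\|_p \le \|f_Q^\lambda\|_2$ in the conclusion is immediate from H\"older's inequality applied to $f_Q^\lambda$, which is supported in $Q$.

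For the lower bound I would decompose $f\chi_Q = f_Q^\lambda + f\chi_E$ with $E=\{x\in Q:|f(x)|>\lambda\}$. Chebyshev's inequality yields $|E|\le \lambda^{-2}\|f\|_2^2$, and H\"older's inequality with conjugate exponents $(2/p,\,2/(2-p))$ then gives
\[\|f\chi_E\|_p \le |E|^{\frac1p-\frac12}\|f\chi_E\|_2 \le \lambda^{1-\frac2p}\|f\|_2^{\frac2p}.\]

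The only computational step with substance is to verify that the prescribed $\lambda \sim |Q|^{-\frac12}\ep^{-\frac{p}{\theta(2-p)}}\|f\|_2^{\frac{p}{\theta(2-p)}+1}$ forces
\[|Q|^{\frac12-\frac1p}\|f\chi_E\|_p \le \tfrac12\,\ep^{1/\theta}\|f\|_2^{1-1/\theta}\]
once the implicit constant is chosen large enough. This is a direct substitution: setting $r=(2-p)/p>0$, the $|Q|$-powers cancel via $r/2+1/2-1/p=0$, the $\ep$-exponent collapses to $1/\theta$ through $r\cdot \tfrac{p}{\theta(2-p)}=\tfrac1\theta$, and the $\|f\|_2$-exponent simplifies to $1-1/\theta$. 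The particular form of $\lambda$ in the statement is exactly what makes the two sides balance.

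Finally, since $f_Q^\lambda$ and $f\chi_E$ have disjoint supports, $\|f\chi_Q\|_p^p = \|f_Q^\lambda\|_p^p + \|f\chi_E\|_p^p$; combining this identity with the preceding bound and the rewritten hypothesis gives $\|f_Q^\lambda\|_p \gtrsim \|f\chi_Q\|_p$, hence the required lower bound on $|Q|^{\frac12-\frac1p}\|f_Q^\lambda\|_p$. The proof is essentially bookkeeping and no genuine obstacle arises; the only delicate point is the exponent computation that pins down the precise form of $\lambda$.
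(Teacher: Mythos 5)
Your proof is correct and follows essentially the same route as the paper's: both split $Q$ into the sublevel and superlevel sets of $\lambda$, bound the superlevel contribution by $\lambda^{1-2/p}\|f\|_2^{2/p}$ (your Chebyshev--H\"older step is equivalent to the paper's pointwise bound $|f|^p\le \lambda^{p-2}|f|^2$ on $\{|f|>\lambda\}$), and choose $\lambda$ so this is at most half of the hypothesized lower bound. The only cosmetic difference is that the paper first rescales to $|Q|=1$, $\|f\|_2=1$ to simplify the exponent bookkeeping, whereas you carry the exponents explicitly.
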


\begin{proof} Changing $|Q|^\frac12 f(|Q|^\frac1d \cdot)/\|f\|_2\to f$, $|Q|^\frac12\lambda/\|f\|_2\to \lambda$,
 and $\ep/\|f\|_2\to \ep $,
   we may assume $|Q|=1$ and $\|f\|_2=1$.
Since
\[ \ep^\frac p\theta \le \int_Q |f|^p dx \le \int_{\{x\in Q: |f|\le \lambda\}} |f|^p dx+\int_{\{x\in Q: |f|> \lambda\}}
 |f|^p dx,\]
it trivially follows that
\[ \ep^\frac p\theta \le \int_{\{x\in Q: |f|\le \lambda\}} |f|^p +\lambda^{p-2}\]
because $\|f\|_2=1$. Now we only need to choose $\lambda$ such that
$\lambda^{p-2}=\frac12 \ep^{\frac p\theta}$. The remaining is easy to see by making the changes of
$f\to |Q|^\frac12 f(|Q|^\frac1d \cdot)/\|f\|_2$, $\lambda\to |Q|^\frac12\lambda/\|f\|_2$ and $\ep/\|f\|_2\to \ep $.
\end{proof}

%%%%%%%%%%%%%%%%%%%%%%%%%%%%%%%%%%%%%%%%%%%%%%%%%%%%%%%%%%%%%%%%%%%%%%%%%%%%%%%%%%%%%%%%%%%%%%%%%%%
%%%%%%%%%%%%%%%%%%%%%%%%%%%%%%%%%%%%%%%%%%%%%%%%%%%%%%%%%%%%%%%%%%%%%%%%%%%%%%%%%%%%%%%%%%%%%%%%%%%

\section{Proof of Theorems}

%To prove Theorem \ref{elliptic} it is enough to consider the case $q\le r$ in which
%$q \le 2(d+\alpha)/d $.
% From interpolation with the conserved mass
%it is clear that if $\|u\|_{L_{t}^{q_0}L_{x}^{r_0}([0,T^{*}))}=\infty$
%for some admissible $(q_0,r_0)$  then  $\|u\|_{L_{t}^{q}L_{x}^{r}([0,T^{*}))}=\infty$
% for all  admissible $(q,r)$ satisfying $r_0\le r$ (see Figure $1$ ??).
% Hence if one can show \eqref{concent} with $\|u\|_{L_{t}^{q}L_{x}^{r}([0,T^{*}))}=\infty$,
% the result for $\|u\|_{L_{t}^{q_0}L_{x}^{r_0}([0,T^{*}))}=\infty$ automatically follows.

As in $\al=2$ case (\cite{bv,bo2,ckh}) the following two lemmas play  crucial roles in showing the mass concentration.
The first one is concerned with decomposition of
the initial datum into functions of which Fourier transforms are
spreading rather than concentrating. In view of uncertainty principle the spreading part of the
initial datum may concentrate on some spatial region.  The second one enables us to find
regions where the linear Schr\"odinger wave concentrates in the mixed norm space $L^q_tL^r_x$
(here $(q,r)$ is $\al$ admissible) when the Fourier
transform of the initial data does not severely concentrate.

\begin{lem}\label{square} Let $(q,r)$ be an  $\alpha$-admissible pair satisfying $q>2$ and $\alpha/q+d/r=d/2$.
Suppose $f \in L^2(\bbr^d)$ and
\begin{equation}\label{lem-assumption}  \big\|
e^{it(-\triangle)^{\frac{\alpha}{2}}}f\big\|_{L^{q}_tL^{r}_x}\,\ge\, \epsilon\end{equation}
for some $\epsilon>0$. Then there
exist a $f_k\in L^2(\mathbb{R}^d)$ and a dyadic shell $B_{n_k}$  for $k=1,2,\cdots,N
$ with $N=N(\|f\|_{L^2},d,\epsilon)$  such that
\begin{itemize}
 \item[(1)]
$ \text{supp}\,\widehat{f_k}\subset
B_{n_k}$, $|\haf| < C2^{-\frac{n_kd}{2}} \ep^{-\nu}\fl^{\mu}$ for all $k=1,\dots, N$,
\item[(2)] $\big\|
e^{it(-\triangle)^{\frac{\alpha}{2}}}f-\sum_{k=1}^Ne^{it(-\triangle)^{\frac{\alpha}{2}}}f_k\big\|_{L^{q}_tL^{r}_x}<\epsilon,$
\item[(3)] $\|f\|^2_{L^2}=
\sum_{k=1}^N\|f_k\|_{L^2}^2+\|f-\sum_{k=1}^Nf_k\|^2_{L^2}.$
\end{itemize}
Here the constants $C$, $\mu$, and $\nu$ depend only on $d$.
\end{lem}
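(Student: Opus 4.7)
The decomposition will be built by an iterative extraction: at each step we peel off a single Fourier-shell bump that both carries a definite fraction of the Strichartz norm and whose Fourier transform is pointwise controlled, then pass to the $L^2$-orthogonal remainder and repeat. The two engines are already in place: Proposition \ref{lem-con} locates a dyadic shell where the Fourier mass is substantial, and Lemma \ref{nonconcent} converts that $L^p$-concentration into an $L^\infty$-truncated piece together with a quantitative lower bound on its $L^2$ norm.

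\textbf{The extraction step.} Given any $h\in L^2$ with $\|\propa h\|_{\mix{q}{r}}\ge \epsilon$, Proposition \ref{lem-con} supplies $\theta\in(0,1)$ and $p\in[1,2)$ depending only on $(q,r)$, together with a dyadic index $n$ for which
\[
\epsilon^{1/\theta}\,\|h\|_2^{1-1/\theta}\ \lesssim\ |B_n|^{\frac12-\frac1p}\,\|\widehat h\,\chi_{B_n}\|_p.
\]
This is precisely the hypothesis of Lemma \ref{nonconcent} applied to $\widehat h$ with $Q=B_n$. It produces a threshold $\lambda\sim 2^{-nd/2}\,\epsilon^{-\nu}\,\|h\|_2^{\mu}$ with $\nu=\frac{p}{\theta(2-p)}$ and $\mu=\nu+1$, and we define $h^\sharp$ by $\widehat{h^\sharp}=\widehat h\,\chi_{B_n\cap\{|\widehat h|\le \lambda\}}$. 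The truncation level is exactly the pointwise bound claimed in (1), while Lemma \ref{nonconcent} simultaneously yields the mass lower bound $\|h^\sharp\|_2\gtrsim \epsilon^{1/\theta}\,\|h\|_2^{1-1/\theta}$.

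\textbf{Iteration and termination.} Set $g_0:=f$, and as long as $\|\propa g_k\|_{\mix{q}{r}}\ge \epsilon$, put $f_{k+1}:=g_k^\sharp$ and $g_{k+1}:=g_k-f_{k+1}$. By construction $\widehat{f_{k+1}}$ and $\widehat{g_{k+1}}$ are restrictions of $\widehat{g_k}$ to complementary sets, so Plancherel gives $\|g_k\|_2^2=\|f_{k+1}\|_2^2+\|g_{k+1}\|_2^2$; iterating produces (3). The pointwise bound (1) for each $f_k$ follows from the extraction step together with $\|g_{k-1}\|_2\le\|f\|_2$ and the positivity of $\mu$. The main technical point—and the step that needs the most care—is the \emph{uniform} lower bound on the mass extracted at each round. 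Because $\theta\in(0,1)$ the exponent $1-1/\theta$ is negative, so the seemingly unhelpful inequality $\|g_k\|_2\le\|f\|_2$ actually improves the estimate to $\|f_{k+1}\|_2\gtrsim \epsilon^{1/\theta}\,\|f\|_2^{1-1/\theta}$, a fixed positive quantity independent of $k$. Orthogonality then forces the procedure to halt after at most $N\lesssim \epsilon^{-2/\theta}\,\|f\|_2^{2/\theta}$ rounds, at which point $\|\propa g_N\|_{\mix{q}{r}}<\epsilon$ and $f-\sum_{k=1}^N f_k=g_N$, giving (2) with the desired quantitative bound $N=N(\|f\|_2,d,\epsilon)$.
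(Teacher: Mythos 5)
Your proposal is correct and follows essentially the same route as the paper: extract one shell via Proposition \ref{lem-con}, truncate with Lemma \ref{nonconcent} to get the pointwise bound and the uniform $L^2$ lower bound $\|f_{k+1}\|_2\gtrsim \epsilon^{1/\theta}\|f\|_2^{1-1/\theta}$ (using that $1-1/\theta<0$), and terminate by Pythagoras. No gaps.
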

\begin{lem}\label{tube} Let $(q,r)$ be an $\alpha$-admissible pair satisfying $2< q \le r$.
Suppose $g \in L^2(\mathbb R^d)$ and
\[\supp \widehat{g} \subset B_k \ \text{ and } \ |\widehat{g}| < C_0 2^{-\frac {kd}{2}} \]
for  $C_0>0$. Then for any $\ep>0$, there exist $N_1\in \mathbb{N}$,  $N_1\le C(d,C_0,\ep)$,
and sets $(\mathcal{Q}_{n})_{1\le n \le N_1} \subset \bbr \times \mathbb R^d$ which is given by
\begin{align}\label{qn}
 \mathcal{Q}_{n} = \{ (t,x) \in \bbr\times \mathbb R^d ; t\in I_n  \mbox{ and }x\in C_n\},
 \end{align}
  where $I_n \subset \bbr$ is an interval with $|I_n|= {2^{-k\alpha}}$ and $C_n$ is a cube  with the side length  $\it{l}(C_n)=2^{-k}$ such that
  \[\| e^{it(-\triangle)^{\frac{\alpha}{2}}} g\|_{\tx{q}{r}(\bbr^{d+1}\backslash \bigcup _{n=1}^{N_1} \mathcal{Q}_{n})} <\ep.\]
  \end{lem}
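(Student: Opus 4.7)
The plan is to reduce to $k=0$ by rescaling, then produce the tubes by a time--space localization of the Strichartz integral, and finally upgrade the count to be uniform via a frequency-localized dispersive estimate combined with the refined Strichartz estimate of Proposition \ref{lem-con}.

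\textbf{Rescaling.}  Setting $g_0(x):=2^{-kd/2}g(2^{-k}x)$ normalizes the hypothesis to $\widehat{g_0}\subset B_0$ and $\|\widehat{g_0}\|_\infty\le C_0$.  The identity
\[
\propa g(2^{-k}x) \,=\, 2^{kd/2}\, e^{i2^{k\al}t(-\Delta)^{\al/2}}g_0(x),
\]
combined with the $\al$-admissibility of $(q,r)$ (which renders the $\tx{q}{r}$-norm scale invariant), shows that a unit-tube cover at level $0$ for $g_0$ pulls back to a cover by tubes at level $k$ of the required dimensions $|I_n|=2^{-k\al}$, $l(C_n)=2^{-k}$, with the same cardinality.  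So I may assume $k=0$.

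\textbf{Localization.}  Plancherel gives $\|g_0\|_2\le C|B_0|^{1/2}C_0\le CC_0$, and Strichartz \eqref{strihomo} then gives $\|\propa g_0\|_{\tx{q}{r}}\le CC_0$.  In particular $\|\propa g_0\|_r^q\in L^1_t$, so there is $T$ with $\int_{|t|>T}\|\propa g_0\|_r^q\,dt<(\ep/2)^q$.  For each fixed $t\in[-T,T]$, $\|(\propa g_0)\chi_{|x|>R}\|_r\to 0$ as $R\to\infty$, with $q$-th power dominated by $\|\propa g_0(t,\cdot)\|_r^q\in L^1([-T,T])$; dominated convergence then yields $R$ with $\int_{-T}^T\|(\propa g_0)\chi_{|x|>R}\|_r^q\,dt<(\ep/2)^q$.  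Covering $[-T,T]\times B(0,R)$ by $\lesssim TR^d$ unit tubes $\mathcal{Q}_n$ gives the cover, with $\|\propa g_0\|_{\tx{q}{r}}<\ep$ on the complement.

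\textbf{Uniformity.}  The hard part is making $N_1\le C(d,C_0,\ep)$ \emph{uniformly} over $g_0$ in the class, since the $T,R$ above depend a priori on the particular $g_0$.  To uniformize I would decompose $\widehat{g_0}$ over a fixed finite partition of $B_0$ into small frequency cubes, writing $g_0$ as a sum of wave packets whose coefficients $(c_j)$ satisfy $\sum|c_j|^2\lesssim C_0^2$.  Truncating the packets of coefficient-modulus below a threshold $\eta=\eta(C_0,\ep)$ leaves $\lesssim(C_0/\eta)^2$ retained packets; the truncated remainder has Fourier sup at most $\eta$, so by Proposition \ref{lem-con} its $\tx{q}{r}$-norm is $\lesssim\eta^\theta C_0^{1-\theta}$, which is absorbed into the $\ep$ budget for $\eta$ small.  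Each retained packet has bounded group velocity, and by the dispersive decay $\|\tilde K_t\|_\infty\lesssim(1+|t|)^{-d/2}$ (stationary phase applied to the smooth amplitude $\tilde\rho\equiv 1$ on $B_0$ in the kernel $\tilde K_t(x):=\int e^{i(x\xi+t|\xi|^\al)}\tilde\rho(\xi)\,d\xi$) together with rapid decay off the cone $\{|x|\sim\al|t|\}$, its $\tx{q}{r}$-mass is concentrated along a spacetime tube whose effective length depends only on $d,C_0,\ep$.  Each retained packet thus contributes $C(d,C_0,\ep)$ unit tubes, and summing across packets gives the required $N_1\le C(d,C_0,\ep)$.
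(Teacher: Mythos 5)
Your first two steps are fine as far as they go, and you correctly identify that the whole content of the lemma is the uniformity of $N_1$; but the ``Uniformity'' step, which is where the proof actually lives, has a genuine gap. The central claim there --- that each retained frequency-localized piece $g_{0,j}$ with $\widehat{g_{0,j}}=\widehat{g_0}\chi_{\tau_j}$ has its $\tx{q}{r}$-mass concentrated along a \emph{single} spacetime tube of length controlled by $d,C_0,\ep$ --- is false. Frequency localization to a unit-scale cube with only an $L^\infty$ bound on $\widehat{g_{0,j}}$ does not localize the function in physical space: take $\widehat{g_{0,j}}$ to be a sum of $M$ bumps on disjoint subcubes of $\tau_j$, each modulated by $e^{-ix_m\cdot\xi}$ with widely separated centers $x_m$; the modulus stays $\le C_0$, yet the evolution consists of $M$ spatially separated lumps, each travelling along its own tube, so no single tube (nor any $g$-independent number of tubes per piece) suffices. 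The only thing preventing $M$ from being large with each lump carrying substantial mass is the $L^2$ bound $\|g_{0,j}\|_2\lesssim C_0$, and exploiting that is precisely the counting argument you have not supplied. There is also an internal inconsistency in the thresholding: if $c_j=\|\widehat{g_0}\chi_{\tau_j}\|_2$ then $\sum|c_j|^2\lesssim C_0^2$ holds but the discarded remainder need \emph{not} have Fourier sup $\le\eta$ (the mass could concentrate on a tiny subset of $\tau_j$), while if $c_j=\sup_{\tau_j}|\widehat{g_0}|$ the remainder bound holds but $\sum|c_j|^2\lesssim C_0^2$ does not; either way the appeal to Proposition \ref{lem-con} for the remainder is not justified as written.

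The paper's proof closes exactly this gap by a different and more direct mechanism, working with the level sets of $|\propah g'|$ itself rather than with a frequency decomposition. Since $\|\widehat{g'}\|_1\lesssim C_0$, the function $(t',x')\mapsto \propah g'(x')$ is uniformly Lipschitz, so the superlevel set $\{|\propah g'|\ge 2\la\}$ is covered by disjoint unit space-time cubes contained in $\{|\propah g'|\ge \la\}$; Chebyshev against the diagonal Strichartz norm $\|\propah g'\|_{L^{2(d+\al)/d}_{t,x}}\lesssim\|g\|_2\lesssim C_0$ bounds the measure of that set, hence the number of cubes, by $C(d,C_0,\ep)$. The sublevel set is handled by lowering the spatial exponent: because $\al>2$, the $\al$-admissible line lies strictly inside the region $2/q+d/r\le d/2$ where \eqref{p0} holds, so one can choose $(q^*,r^*)$ with $r^*<r$, $q^*<q$, $r^*/q^*=r/q$ and estimate $\|\propah g'\|^q_{\txp{q}{r}(E)}\le\la^{(r-r^*)q/r}\|\propah g'\|_{\txp{q^*}{r^*}}^{q}\lesssim\la^{(r-r^*)q/r}$, which is $<\ep^q$ for $\la$ small depending only on $C_0,\ep$. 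If you want to salvage your route you would need a genuine wave packet decomposition (localization in frequency \emph{and} in space) followed by a count of the packets carrying non-negligible coefficient --- at which point you are reconstructing the level-set count that the paper obtains in one line from Chebyshev.
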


{\bf Notation.}
  Let $E$ be a measurable set in $\mathbb R^{d+1}$ and $f:\bbr^{d+1} \to \bbr$ is a measurable function.
  If $E_t=\{ x:(t,x)\in E\}$ is measurable in $\mathbb R^d$ for all $t\in \bbr$, we define the mixed integral $\|f\|^q_{\tx{q}{r}(E)}$ by   \[ \|f\|^q_{\tx{q}{r}(E)} = \int _{\bbr} \big( \int _{E_t} |f(t,x)|^r dx \big)^{\frac qr}
  dt.\]

Once we have the refinement of Strichartz estimates (Proposition \ref{lem-con})
the proofs of Lemma \ref{square}  and \ref{tube} can be given by a modification of the argument in \cite{ bv,bo2}.
The proofs of lemmas are given in Appendix.

\smallskip

{\it Proof of Theorem \ref{elliptic}.}\\
The proof consists of following steps:\\
$\cdot$ \textit {Controlling the inhomogeneous part,  }\\
$\cdot$ \textit {Decomposition to the initial datum with non-concentration Fourier transforms,}\\
$\cdot$ \textit {Figuring out the concentrating region, }\\
$\cdot$ \textit {Determining the size of mass concentration region.}\\
The two lemmas (Lemmas \ref{square} and \ref{tube}) will be incorporated into
the second and the third step respectively.\\

\indent
To prove Theorem \ref{elliptic} it is enough to consider the case $q\le r$ in which
$q \le 2(d+\alpha)/d $.
 From interpolation with the conserved mass
it is clear that if $\|u\|_{L_{t}^{q_0}L_{x}^{r_0}([0,T^{*}))}=\infty$
for some admissible $(q_0,r_0)$  then  $\|u\|_{L_{t}^{q}L_{x}^{r}([0,T^{*}))}=\infty$
 for all  admissible $(q,r)$ satisfying $r_0\le r$.
 Hence if one can show \eqref{concent} with $\|u\|_{L_{t}^{q}L_{x}^{r}([0,T^{*}))}=\infty$,
 the result for $\|u\|_{L_{t}^{q_0}L_{x}^{r_0}([0,T^{*}))}=\infty$ automatically follows.

\

Let $u$ be the maximal solution
to \eqref{alphsch} over the maximal forward existence  time interval
 $[0,T^{*})$ so that \eqref{blowup-qr-1} is satisfied for an
$\alpha$-admissible pair $(q,r)$, $2<q\le r$
and $\|u\|_{L_{t}^{q}L_{x}^{r}([0,t) \times \mathbb{R}^d)}
<\infty$ for $0 < t < T^{*}<\infty$.

\noindent Then for a fixed small $\eta>0$  there is
 a strictly increasing sequence $\{t_n\}_{n=1}^{\infty}$
 in $[0,T^*)$ such that $\lim_{n\to \infty} t_n=T^\ast$
and  for every $n \in \mathbb{N}$
 \begin{equation}\label{e-3}
         \|u\|_{L_{t}^{q}L_{x}^{r}((t_n,t_{n+1}) \times \mathbb{R}^d)} = \eta.
\end{equation}
By Duhamel's formula, we have for $t \in (0,T^{*})$
\begin{equation*}
u(t,x) = e^{i(-\Delta)^{\frac{\alpha}{2}} (t-t_n)}u(t_n) \mp i \int_{t_n}^{t} e^{i(-\Delta)^{\frac{\alpha}{2}}
(t-s)}|u(s)|^{\frac{2\alpha}{d}}u (s) \, ds.
\end{equation*}
% For any $ t \in (t_n,t_{n+1})$  let us set
% \[F_n(u) = i
%\int_{t_n}^{t}e^{i(-\Delta)^{\frac{\alpha}{2}} (t-s)}|u(s)|^{\frac{2\alpha}{d}}u (s) \, ds.\]
Applying Strichartz's estimate with \eqref{e-3}, we have
 \begin{eqnarray}\label{Strichartz}
        & & \|u - e^{i(-\Delta)^{\frac{\alpha}{2}} (t-t_n)}u(t_n)\|_{L_{t}^{q}L_{x}^{r}((t_n,t_{n+1}) \times \mathbb{R}^d)}\\
                  &\leq& C   \,\|u\|_{L_{t}^{q}L_{x}^{r}((t_n,t_{n+1}) \times
         \mathbb{R}^d)}^{\frac{2\alpha+d}{d}}
         = C \,  \eta^{\frac{2\alpha+d}{d}} \nonumber
\end{eqnarray}
where \eqref{r-range} holds.
Hence from  \eqref{e-3}, \eqref{Strichartz} and time
translation invariance property we obtain
\begin{equation*}
 \|e^{i(t-t_n)(-\Delta)^{\frac{\alpha}{2}}}u(t_n)\|_{L_{t}^{q}L_{x}^{r}((t_n, t_{n+1}) \times
         \mathbb{R}^d)}
  \geq  \eta -  C \,  \eta^{\frac{2\alpha+d}{d}} > \eta^{\frac{2\alpha+d}{d}}
 \end{equation*}
for sufficiently small $\eta$.

Fix $n \in \mathbb{N}$ and the time interval $(t_n,t_{n+1})$. We
denote $f = u(t_n)$ and then by the mass conservation we have \[\|f\|_{L^{2}(\mathbb{R}^d)} =
\|u_0\|_{L^{2}(\mathbb{R}^d)}.\]
Applying  Lemma \ref{square} to $f$ with $\epsilon =
  \eta^{\frac{2\alpha+d}{d}}$, there exists $\{f_\sigma\}_{1\leq \sigma \leq L}$ such that
 $\widehat{f}_\sigma$ is supported in a dyadic shell $B_{n_\sigma}$,
\begin{equation}\label{size}
|\widehat{f}_\sigma| \leq C\,\ep^{-\nu} 2^{-\frac{n_\sigma d}{2}}\end{equation}
 and
 \begin{equation}\label{p4}
 \|e^{i(t-t_n)(-\Delta)^{\frac{\alpha}{2}}}f - \sum_{s=1}^{L}e^{i(t-t_n)(-\Delta)^{\frac{\alpha}{2}}}f_\sigma
 \|_{L_{t}^{q}L_{x}^{r}(\mathbb{R} \times \mathbb{R}^d)} < \eta^{\frac{2\alpha+d}{d}},
 \end{equation}
 where $L=L(\|f\|_{L^{2}}, d, \eta)$.

 By H\"{o}lder's inequality with $\frac{2}{r}+ \frac{r-2}{r}=1$, we have
 \begin{eqnarray}\label{holder}
 &\intn\left( \intd
|u|^2|u(t,x)-\sum_{s=1}^{L}e^{i(t-t_n)(-\Delta)^{\frac{\alpha}{2}}} f_\sigma|^{r-2} dx
\right)^{\frac qr} dt\\
  &\leq \|u\|_{\tx{q}{r}((t_n,t_{n+1}) \times
         \mathbb{R}^d)}^{\frac{2q}{r}}
  \|u -
  \sum_{s=1}^{L}e^{i(t-t_n)(-\Delta)^{\frac{\alpha}{2}}}
  f_{\sigma}\|_{\tx{q}{r}}^{\frac {q(r-2)}{r}}.\nonumber
  \end{eqnarray}
 By using  H\"{o}lder's inequality with $\frac{2}{r}+ \frac{r-2}{r}=1$ again, the last term of \eqref{holder} is bounded by
 \begin{align*}
& & \|u\|_{\tx{q}{r}((t_n,t_{n+1}) \times
         \mathbb{R}^d)}^{\frac{2q}{r}} \biggl(\|u - e^{i(t-t_n)(-\Delta)^{\frac{\alpha}{2}} }u(t_n)\|_{\tx{q}{r}((t_n,t_{n+1}) \times
         \mathbb{R}^d)}^{\frac {q(r-2)}{r} }  \\
 & & \qquad \quad + \ \|e^{i(t-t_n)(-\Delta)^{\frac{\alpha}{2}}} u(t_n) -
  \sum_{\sigma=1}^{L}e^{i(t-t_n)(-\Delta)^{\frac{\alpha}{2}}}
  f_\sigma\|_{\tx{q}{r}}^{\frac {q(r-2)}{r}} \biggl)
  :=  \mathcal{E} + \mathcal{F}.
\end{align*}
In order to estimate $\mathcal{E}$ and $\mathcal{F}$, we apply \eqref{e-3}, \eqref{Strichartz} and \eqref{p4}.
Since $ \frac{(2\alpha+d)r - 4\alpha}{d}  >  r$ for $r \geq q > 2$, we see that
\begin{eqnarray}\label{small}
 \mathcal{E} + \mathcal{F} &\leq&  C \, \eta^{\frac{2q}{r}}
  \eta^{\frac{(2\alpha+d)(r-2)q}{dr}}< \frac{\eta^q}{2}.
\end{eqnarray}

\noindent We may split \eqref{e-3} into two integrals such as
 \begin{eqnarray}\label{decompose}
  \eta^q  &=& \int_{t_n}^{t_{n+1}} \left (\int_{\bbr^d} |u|^r dx \right)^{\frac
  qr}  \, dt\nonumber \\
  &\le& \  \intn\left( \intd |u|^2|\sum_{\sigma=1}^{L}e^{i(t-t_n)(-\Delta)^{\frac{\alpha}{2}}} f_\sigma|^{r-2} dx \right)^{\frac qr}
    dt \\
  & & \ + \   \intn\left( \intd |u|^2|u-\sum_{\sigma=1}^{L}e^{i(t-t_n)(-\Delta)^{\frac{\alpha}{2}}} f_\sigma|^{r-2} dx \right)^{\frac qr}
    dt.  \nonumber
\end{eqnarray}
From \eqref{small} and \eqref{decompose} we obtain that
 \[\intn\left( \intd |u|^2|\sum_{\sigma=1}^{L}e^{i(t-t_n)(-\Delta)^{\frac{\alpha}{2}}} f_\sigma|^{r-2} dx \right)^{\frac qr} dt
   \geq \frac{\eta^q}{2}.\]
Since $L = L(\|u_0\|_{L^2(\mathbb{R}^d)},
\eta)$, there exists an $n_0$ and an $f_0 = f_{n_0}$ supported on
a dyadic shell $B_k$ for some $k$ such that
 \begin{equation}\label{p-5}
  \int_{t_n}^{t_{n+1}}\, \!\!\!
\big(\int_{\mathbb{R}^d}|u(t,x)|^{2} \,
\big|e^{i(t-t_n)(-\Delta)^{\frac{\alpha}{2}}}f_0(x) \big|^{r-2} \, dx
\big)^{\frac{q}{r}}dt \geq \epsilon_{0},
\end{equation}
where we denote by $\epsilon_{0} = \frac{1}{2}
\frac{\eta^{q}}{L_{0}^{(r-2)q/r}}$.
Then from \eqref{size} we have $|\widehat f_0|\le C\,\ep^{-\nu} 2^{-\frac{k d}{2}}$.

 By Lemma \ref{tube}, there is a $L_1=L_1(\|f_0\|_{L^2},\eta)$ and a set of regions
$\{\mathcal{Q}_{n}\}_{1 \leq n \leq L_{1}}$ defined by
 $$\mathcal{Q}_{n} = \{ (t,x) \in \bbr\times \bbr^d \, ; \, t \in I_n  \mbox{ and }(x-4\pi t\xi_0)\in C_n \}, $$
where $C_{n}$ is a cube of side length $l(C_{n}) = 2^{-k}$ and $I_n$
is an interval of length $|I_{n}| = 2^{-k\alpha}$ such that
             $$\|e^{i(t-t_n)(-\Delta)^{\frac{\alpha}{2}}}f_0 \|_{L_{t}^{q}
             L_{x}^{r}({\mathbb{R}}\times {\mathbb{R}^d}\setminus \bigcup_{n=1}^{L_1} \mathcal{Q}_{n}) }
              < (\frac{\epsilon_0}{2\eta^{2q/r}})^{\frac{r}{q(r-2)}}. $$
   Then by H\"older's inequality with $\frac{2}{r} + \frac{r-2}{r} =1$ repeatedly, we have
\begin{eqnarray*}
   & &  \left\| |u|^2|e^{i(t-t_n)(-\Delta)^{\frac{\alpha}{2}}} f_0|^{r-2} \right\|_{\tx{q}{r}
   ((t_n,t_{n+1}) \times\bbr^{d} \setminus  \bigcup_{n=1}^{L_1} \mathcal{Q}_{n})}^q \\
 &\leq& \|u\|_{L_{t}^{q}L_{x}^{r}((t_n,t_{n+1}) \times \mathbb{R}^d)}^{\frac{2q}{r}} \,
 \|e^{i(t-t_n)(-\Delta)^{\frac{\alpha}{2}}}f_0 \|_{L_{t}^{q}L_{x}^{r}(\mathbb{R} \times
 \mathbb{R}^d)\setminus \bigcup_{n=1}^{L_1} \mathcal{Q}_{n})}^{\frac{q(r-2)}{r}} \\
 &<& \eta^{2q/r}\frac{\epsilon_0}{2\eta^{2q/r}} =
 \frac{\epsilon_0}{2}.
\end{eqnarray*}
Thus from  \eqref{p-5} it follows that
  \begin{equation*}
  \left\| |u|^2|e^{i(t-t_n)(-\Delta)^{\frac{\alpha}{2}}} f_0|^{r-2} \right\|_{\tx{q}{r}
   ((t_n,t_{n+1}) \times \bbr^{d} \, \cap \, (\bigcup_{n=1}^{L_1} \mathcal{Q}_{n}))}^q  \, \geq \, \frac{\epsilon_0}{2}.
   \end{equation*}
This implies that there is a region $\mathcal{Q}_0 \in \{\mathcal{Q}_{n}\}_{n=1}^{L_1}$ such
that
 \begin{equation}\label{p-60}
   \int_{(t_n,t_{n+1}) \, \cap \, I_{0}}\,  \mathcal{H}(t) \, dt  \ \geq \ \frac{1}{2L_1}\, \ep_0:= \ep_1
   \end{equation}
   where we set
   \begin{align*}
   \mathcal{H}(t) &= \big(\int_{\mathcal{Q}_0^t}|u(t,x)|^{2} \,
    |e^{i(t-t_n)(-\Delta)^{\frac{\alpha}{2}}}f(x)|^{r-2} \, dx
   \big)^{\frac{q}{r}}  \ \text{and}\\
   \mathcal{Q}^t_{0}=&\{x:(x,t)\in \mathcal{Q}_0\}.
   \end{align*}

\noindent Since $|\widehat f_0|\le C\, 2^{-\frac{k d}{2}}$ and $\widehat{f_0}$ is
supported in a dyadic shell of measure $2^{k d}$, we have
 \begin{align*}
  |e^{i(t-t_n)(-\Delta)^{\frac{\alpha}{2}}}f_0(x)|^{\frac{q(r-2)}{r}} \,& \leq \, \left( \int_{B_0} |\widehat f_0(\xi)| \, d\xi \right)^{\frac{q(r-2)}{r}}\\
  & \le C 2^{k\al}\ep^{-\frac{2\al\nu}{d}} =\, C \,
|I_0|^{-1}\ep^{-\frac{2\al\nu}{d}},
\end{align*}
where we use  $dq(r-2)/r = 2\alpha$ and $|I_0|=2^{-k\al}$.  Thus  we have
\begin{align}\label{p-6}
\mathcal{H}(t)\le C |I_0|^{-1} \ep^{-\frac{2\al\nu}{d}}\|u_0\|_{L^2_x}^{\frac{2q}{r}},
\end{align}
and  in view of \eqref{p-60}
\begin{eqnarray*}%\label{important}
   \epsilon_1
   &\leq& |I_0|^{-1} \ep^{-\frac{2\al\nu}{d}}\|u_0\|_{L^2_x}^{\frac{2q}{r}} (t_{n+1} - t_{n}).
 \end{eqnarray*}
 Thus we  find the lower bound
\[t_{n+1} - t_{n} \geq C  \, |I_0|\ep^{{2\al \nu}/{d}}{\epsilon_1}:= C  \, |I_0|\epsilon_2.\]
We divide the integral in the left hand side of \eqref{p-60} into two integrals such that
\begin{eqnarray*}
 \big(\int_{t_n}^{t_{n+1}-A\, |I_0|\epsilon_2}
  + \int_{t_{n+1}-A\, |I_0|\epsilon_2}^{t_{n+1}} \big)\,
    \mathcal{H}(t) \, dt .
  \end{eqnarray*}
By \eqref{p-6}, similarly  we can choose $A$ small enough so that
 \begin{eqnarray*}
  \int_{t_{n+1}-A\, |I_0|\epsilon_2}^{t_{n+1}} \mathcal{H}(t) \, dt \
  \le \ \ A \, \ep_1 \, \|u_0\|_{L^2}^{\frac{2q}{r}} \ \le \ \frac{\ep_1}{2}.
  \end{eqnarray*}
In view of this and \eqref{p-60}, we obtain that
\begin{equation*}
   \int_{ (t_n,t_{n+1}-A\, |I_0|\epsilon_2) \, \cap \,  I_{0}}\, \mathcal{H}(t) \, dt   \ \geq \ \frac{\epsilon_1}{2}.
   \end{equation*}
The inequality \eqref{p-6} leads to us that
 \begin{eqnarray*}
\frac{\epsilon_1}{2} &\leq& C \, |I_{0}| \sup_{t \, \in
(t_n,t_{n+1}-A\, |I_0|\epsilon_2)} \mathcal{H}(t)\\
 &\leq& C \,   \,{\ep_1}{\ep_2}^{-1}
\left( \sup_{t \, \in (t_n,t_{n+1}-
A\, |I_0|\epsilon_2)}\int_{\mathcal{Q}_0^t}|u|^{2} \, dx
   \right)^{\frac{q}{r}}.
 \end{eqnarray*}
Hence we obtain that
\[\sup_{t \, \in
(t_n,t_{n+1}-A\, |I_0|\epsilon_2)}\int_{\mathcal{Q}_0^t}|u|^{2} \,
 dx \geq C \, \left(\frac{\ep_2}{2}\right)^{\frac rq}.\]
 Thus, for each $t_n$ there are $t_0\in (t_n, t_{n+1}-A\, |I_0|\epsilon_2]$ and a cube $\mathcal{Q}_0^{t_{0}}$
such that
\[ \int_{\mathcal{Q}_0^{t_{0}}}|u(t_{0},x)|^{2} \, dx \geq
  \frac{C}{4} \left(\frac{\epsilon_2}{2}\right)^{\frac{r}{q}}.\]
Since $l(\mathcal{Q}_0^{t_{0}}) = |I_0|^{\frac{1}{\al}}$, then $\mathcal{Q}_0^{t_{0}}$ is
contained in a ball of radius $C_d \, |I_0|^{\frac{1}{\al}}$. Since $t_{n+1}-t_0 \ge C\, \ep_2|I_0|$,
 \[  \ep_2^{\frac{1}{\al}}|I_0|^{\frac {1}{\al}} \le C \, (t_{n+1}-t_0)^{\frac 1\alpha} \le C \, (T_* -t_0)^{\frac 1\alpha}.\]
 Hence $\mathcal{Q}_0^{t_{0}}$ can be covered by a
finite number (depending  on $\eta, d$ and $\|u_0\|_2$) of
balls of radius $r = (T^{*} -t_{0})^{\frac{1}{\alpha}}$.

Therefore, there exists
 $x_0  \in \mathbb{R}^{d}$ such that
$$ \int_{B(x_0, (T^{*} -t_{0})^{\frac{1}{\alpha}})}|u(t_{0},x)|^{2} \, dx \geq
\varepsilon,$$
where $\varepsilon$ is  $\varepsilon \, (\|u_0\|_{L^2(\mathbb{R}^d)}, d, \eta)$ and independent of $t_n$.
This completes the proof. \qed

\

\textit{Proof of Theorem \ref{hartree}}.\\  We proceed as in proof of Theorem \ref{elliptic}.
Let $u$ be the maximal solution
to \eqref{hartee} over the maximal forward existence  time interval
 $[0,T^{*})$ so that \eqref{blowup-qr-1} holds for some Strichartz
admissible pairs $(q,r)$ satisfying \eqref{hart-qr},
and $\|u\|_{L_{t}^{q}L_{x}^{r}([0,t) \times \mathbb{R}^d)}
<\infty$ for $0 < t < T^{*}<\infty$.
Let $\eta$ and sequence $t_1,\dots,t_n,\dots$ be given as before  such that
$ t_n\nearrow T^\ast$
and \eqref{e-3} is satisfied  for every $n \in \mathbb{N}$.
By the  Duhamel's
formula we may write for $t \in (0,T^{*})$
\begin{equation*}%\label{duhamel-1}
u(t) = e^{i\alphla (t-t_n)}u(t_n)  \pm i \int_{t_n}^{t}
e^{i(t-s)\alphla}[(|x|^{-2}*|u(s)|^2)u(s) ]\, ds.
\end{equation*}
We need to show the similar estimate as \eqref{Strichartz} for the solution of Hartree equation. That is to say,
\textit{for the  solution $u$ of \eqref{hartee} there is a constant
 $C>0$ such that
\begin{equation}\label{claim}
\big\| \int_{t_n}^t e^{i(t-s)\alphla}[(|x|^{-\al}*|u(s)|^2)u(s) ]
ds\big\|_{ L^q_tL^r_x( [t_n,t_{n+1}]\times \mathbb{R}^d)} \,\le C
\,  \eta^{1+\theta}
\end{equation}
for $(q,r)$ satisfying \eqref{hart-qr} and for some $0<\theta<1$.}
We note that the inequality above is obtained by repeating the local wellposement argument.
See the argument around \eqref{ok}\footnote[2]{In fact,  with $v=0$ one can easily show \eqref{claim} with $\theta=0$.} in Section \ref{intro}.
After achieving this we only need to deal with the homogeneous part of the solution to show the mass concentration.
Hence, the remaining parts  are the same as those for Theorem \ref{elliptic}.
We omit the details.

 \section{Appendix}
 To prove Lemma \ref{square} and Lemma \ref{tube} we modify Bourgain's arguments in \cite{bo2} (also see \cite{bv})
 for  the Schr\"odinger operator  of higher orders $\alpha$ with $\alpha > 2$. The proof of Lemma \ref{square} relies on
 Proposition \ref{lem-con} which is obtained in Section 2. For the Proof of Lemma \ref{tube} the required strengthened estimate  is given by
\eqref{p0}  because  the $\alpha$ admissible pairs are contained in the range $2/q+d/r\le d/2$.

\begin{proof}[Proof of Lemma \ref{square}]
From \eqref{lem-assumption} and Proposition \ref{lem-con}, we see that there are $0<\theta<1$ and $p < 2$ such that
\begin{eqnarray*}
\epsilon\,\le\,\big\|
e^{it(-\triangle)^{\frac{\alpha}{2}}}f\big\|_{{L^{q}_tL^{r}_x}}\,\le\,\|f\|_{L^2}^{1-
\theta}\big(\sup_{k}\,
2^{kd(\frac{1}{2}-\frac{1}{p})}\|\widehat f {\chi_{B_{k}}}\|_{L^p}\big)^{\theta}.
\end{eqnarray*}
So there exists a dyadic shell $B_{n_1}$  for some $n_1$ such that
\begin{eqnarray}\label{10.5}
\| \, \widehat{f} \, \|_{L^p(B_{n_1})}^p\ge
\big(\epsilon^{\frac{1}{\theta}} \, 2^{ n_1d(\frac{1}{p}-\frac{1}{2})} \, \|f\|_{L^2}^{1-\frac{1}{\theta}}\big)^p.
\end{eqnarray}
Applying Lemma \ref{nonconcent} to $\widehat f$ and $ B_{n_1}$, we have
\[\ep^{\frac1\theta}\|f\|_2^{1-\frac 1\theta}
\lesssim \|\widehat f_{B_{n_1}}^\lambda\|_2 \]
when $\lambda\sim |B_{n_1}|^{-\frac12} \ep^{-\frac p{\theta(2-p)}}\|f\|_2^{\frac p{\theta(2-p)}+1}$.
We now define $f_1$ by $\widehat f_1 = \widehat f_ {B_{n_1}}^{\la}$ and insert $|B_{n_1}|\sim 2^{n_1 d}$. If $\|\propa (f-f_1)\|_{\tx{q}{r}} \le \ep$, we are done by setting $\nu = \frac p{\theta(2-p)}$,
$\mu= \frac p{\theta(2-p)}+1$.
 The property $(3)$ follows from disjoint supports of $\widehat{f_1}$ and $\widehat f - \widehat{f_1}$. On the other hand,
if $\|\propa (f-f_1)\|_{\tx{q}{r}} \ge \ep$, we repeat the above argument for $f-f_1$ to find $f_2$, $B_{n_2}$, $\la$
such that $
  |\widehat{f_2}|\le \la, \quad \la \sim |B_{n_2}|^{-\frac12}\ep^{-\nu}\fl^{\mu},$ and
$ \ep \|f\|_2^{1-\frac1{\theta}}\le \ep \|f-f_1\|_2^{1-\frac1{\theta}} \lesssim \|f_2\|_2,$
where the first inequality follows from $\|f\|_2= \|f_1\|_2 + \|f-f_1\|_2$.  The $L^2$ orthogonality holds as well,
$\|f-f_1\|_2^2 = \|f_2\|_2^2+\|f-(f_1+f_2)\|_2^2.$
\\ \indent Recursively we can find $f_k$ supported on $B_{n_k}$ in the frequency
space for  $k=1,2,\dots,N$ such that
\begin{align*}
& |\widehat{f_k}| < C 2^{-\frac{dn_k}{2}}\ep^{-\nu}\|f\|_2^{\mu}, \quad \|f_k\|_2 \ge \ep\|f\|_2^{1-\frac 1\theta},\\
& \|f\|_2^2 = \sn \|f_k\|_2^2 +\|f - \sn f_k\|_2^2.
\end{align*}
This process will stop within a finite number of steps. The number of steps depends on $\epsilon$ and $\|f\|_{L^2}$ because
\begin{align*}
\big\|e^{it(-\triangle)^{\frac{\alpha}{2}}}f-\sum_{j=1}^ne^{it(-\triangle)^{\frac{\alpha}{2}}}f_j\big\|_{{L^{q}_tL^{r}_x}}^2
&\le
C  \|f-\sum_{j=1}^nf_j\|_{L^2}^2\\
&= C  (\|f\|_{L^2}^2-\sum_{j=1}^n\|f_j\|_{L^2}^2)\\
&\le C  \big(\|f\|_{L^2}^2- n \, C \|f\|_{L^2}^{-a}\epsilon^b\big).
\end{align*}
This completes the proof.
\end{proof}

\begin{proof}[Proof of Lemma \ref{tube}]
  We follow closely the argument for the proof Lemma $3.3$ in \cite{bv}. Let $g' \in L^2(\mathbb R^d)$ be
   the normalized function of $g$ defined by $\widehat {g'}(\xi')= 2^{\frac {kd}{2}} \widehat g(2^k \xi').$
   Then $\mbox{supp } \widehat{g'} \subset B_1$, $\|g'\|_{L^2}= \|g\|_{L^2}$ and $|\widehat{g'}|<1$.
   We see that
   \begin{align*}
     e^{i2^{k\al}t (-\Delta)^{\frac {\al}{2}}}g'(2^k x) &=
     2^{\frac{kd}{2}}\int e^{i2^kx\cdot \xi + it2^{k\al}|\xi|^{\al}} \widehat g(2^k\xi) d\xi\\
     &= 2^{-\frac{kd}{2}}\int e^{ix\cdot \xi + it|\xi|^{\al}} \widehat g(\xi) d\xi
     = 2^{-\frac{kd}{2}} \propa g(x).
   \end{align*}
   That is to say,
   \begin{align}\label{A}
  \propa g(x) = 2^{\frac {kd}{2}} \propah g(x')
  \end{align}
   by the change of variable  $(t,x)\to (t',x')= (2^{k\al}t, 2^k x)$.  \\

  We will keep track of the free evolution of $g'$.
 Let $E \subset \bbr\times \mathbb R^d$ be the set $\{(t',x'):|\propah g'(x')|<\la\}$ for a given $\la$.
 We have
  \begin{align*}
\| \propah g'\|_{\txp{q}{r}(E)}^q
  & = \int_{\bbr}\big( \int_{\{x' : |\propah g'(x')| \, < \, \la\}}
  |\propah g'(x')|^{r^* + r-r^*} dx' \big)^\frac qr dt'\\
  &\le \la^ {(r-r^*)\frac qr} \int \left( |\propah g'(x')|^{r^* } dx' \right)^\frac qr dt'.
  \end{align*}
Since $\alpha>2$, we now note that
the  $\al$-admissible line is properly contained in the region of $2/q+d/r\le d/2$. Hence,
we can pick up a pair $(q^*, r^*)$ in the region $\frac {d}{r^*} + \frac{2}{q*} \le \frac{d}{2}$
 such that $q^*<q$, $r^*<r$  and  ${r^*}/{q^*}= r/q $. Such choice may not be possible for the end point $\frac 1r= \frac{d-\al}{2d}$ but it was excluded because we are assuming $q>2$ and $r\neq \infty$ (see \eqref{r-range}).
   Then for $\al$-admissible $(q,r)$, \eqref{p0} yields
 \[\| \propah g'\|_{\txp{q}{r}(E)}^q \ \le \ C\, \la^ {(r-r^*)\frac qr}\|\widehat{g'}\|_{L^2} \ \le
 C\la^ {(r-r^*)\frac qr}\|\widehat{g'}\|_{L^{\infty}},\]
 where the second inequality follows from the fact that $ \mbox{supp } \widehat{g'} \subset B_1$. Since $r^*<r$, by choosing $\la=\la(C_0,\ep)$ small enough, we have
 \[\| \propah g'\|_{\txp{q}{r}(\tilde E)}^q \le \ep^ q\]
 where $\widetilde E = \{(t',x'): |e^{it'\Delta} g'(x')| < 2\la\}$.\\

 Due to the normalization,
 $\mbox{supp} \, \widehat {g'} \subset B_1$
and $\| \widehat {g'} \|_{L^\infty} \le C_0$. Hence the  function $(x,t)\to e^{it\alphla} g'(x)$ is smooth with bounded derivatives. In particular,  the map
 \[ e^{it'\alphla} g'(x)= \int_{\bbr^d} e^{2\pi i (x\cdot \xi -2\pi t|\xi|^{\al})} \, \widehat{g'}(\xi) \, d\xi\]
 is Lipschitz. That is,
\[ | e^{i t' \alphla} g'(x') - e^{i t'' \alphla} g(x'') |
\ \le \ C \, ( | t' - t'' | + |x' - x''| ), \]
where $C = C(C_0 , d) \ge 1$.
Hence, if $(t', x') \in  E$ and  $|x' - x''|,|t' - t''| \le \frac {\la}{2C} < \frac 1 2$, then $(t'',x'')$ is in $\widetilde E$. In other words,
for $(t', x')\in (\bbr\times \bbr^d)\backslash \tilde E$, there is a space-time cube $P= J \times K $
centered at $(t',x')$ with  $|J| = \frac {\la}{C}$ and $\it{l} (K) = \frac {\la}{C}$ such that $P \in (\bbr\times \bbr^d)\backslash  E$. Let us cover $(\bbr\times \bbr^d)\backslash \tilde E$ with the family of $(P_r)_{r\in I}$ such that $\mbox{Int} (P_r) \cap \mbox{Int} (P_s) = \emptyset$ for $r\neq s$, and
\begin{align}\label{pr}
  \{ |\propah g'(x') | \ge 2 \la \} \subset \bigcup_{r \in I} P_r
  \subset \{ |\propah g'(x') | \ge \la \}.
\end{align}

 where $\mbox{Int} (P_r)$ denotes the interior of the set $P_r$. Note that the index set $I$ is finite.
We set $N_1 = \sharp I$. It follows from \eqref{pr} and the Strichartz's estimate  that
\begin{align*}
& N_1 \big( \frac {\la} C \big)^{d+1} =
\big| \bigcup_{r \in I} P_r \big| \le
| \{ |\propah g'(x') | \ge \la \} | \\
&\le \ \la^{- \frac{2(d+\al)}{d}}
\| e^{i t' \alphla} g'(x') \|_{L_{t,x}^{\frac{2(d+\al)}{d}}} (\bbr \times \bbr^d)^{\frac{2(d+\al)}{d}}
\\
&\le  \ C \, \la^{- \frac{2(d+\al)}{d}} \| g \|_{L^2}^{\frac{2(d+\al)}{d} }
\end{align*}
from which we deduce that $N_1 \le C(\| g \|_{L^2}, d, C_0, \epsilon)$.
Actually, since our hypothesis implies that $\| g \|_{L^2} \le C_0$,
we can also write $N_1 \le C(d, C_0, \epsilon)$. For simplicity let $\{1,\dots, N_1\}$ denote the index set $I$.
For any integer $1 \le n \le N_1$, let $(t_n, x_n)$ be the center of $P_n$ and let $I_n \subset \bbr$ be the interval of center $\frac {t_n} {2^{k\alpha}}$ with
$|I_n | = \frac {1} {2^{k\alpha}}$. Also set $I_n ' =2^{k\alpha}I_n$.
Let $C_n \in \mathcal C$ of center $2^{-k} x_n$ with $\ell(C_n) = 2^{-k}$ and
let $C_n ' =2^{k}C_n$. Finally let $\mathcal{Q}_{n}$ be defined by \eqref{qn}.
Then from the choice of $\lambda$ it follows that
 \[\| \propah g'\|^q_{\txp{q}{r}(\bbr^{d+1}\backslash \bigcup _{n=1}^{N_1} I_n'\times C_n')} <\ep^q .\]
 By \eqref{A} and reversing the change of variables $(t',x')\to (t,x)$, we have
 \begin{align*}
 &\| \propa g\|^q_{\tx{q}{r}(\bbr^{d+1}\backslash \bigcup _{n=1}^{N_1} \mathcal{Q}_{n})} \\
 &= 2^{k( d/2-d/r-\al/q)}\| \propah g'\|^q_{\txp{q}{r}(\bbr^{d+1}\backslash \bigcup _{n=1}^{N_1} I_n'\times C_n')}
   <\ep^q
 \end{align*}
 since $(q,r)$ is admissible.
This concludes the proof of the lemma.
\end{proof}

%%%%%%%%%%%%%%%%%%%%%%%%%%%%%%%%%%%%%%%%%%%%%%%%%%%%%%%%%%%%%%%%%%%%%%%%%%%%%%%%%%%%%%%%%%%%%%%%%%%
%%%%%%%%%%%%%%%%%%%%%%%%%%%%%%%%%%%%%%%%%%%%%%%%%%%%%%%%%%%%%%%%%%%%%%%%%%%%%%%%%%%%%%%%%%%%%%%%%%%

\end{document}